\documentclass{amsart}

\usepackage{style}

\title[]{A Calabi--Yau threefold coming from two black holes}

\author{Dino Festi, Bert van Geemen}

\address{Dipartimenti di matematica ``Federigo Enriques'', Universit\`a degli studi di Milano, 20133, Milano, Italy.}
\email{dino.festi@unimi.it}
\address{Dipartimenti di matematica ``Federigo Enriques'', Universit\`a degli studi di Milano, 20133, Milano, Italy.}
\email{lambertus.vangeemen@unimi.it}

\date{\today}

\begin{document}

\maketitle

\begin{abstract}
    In this paper, we show that a set of six square roots of homogeneous polynomials in four variables, related to a binary system of black holes studied by Stefan Weinzierl, is not rationalizable.
    We prove it by showing that the variety $X$ associated to the product of four of the six square roots is not unirational.
    In particular, we show that the smooth model of $X$ is a Calabi--Yau threefold.
\end{abstract}

\section{Introduction}
Experiments in high energy physics often require predictions based on the computation of Feynman integrals, see for example~\cite{Wei22}.
Unfortunately, an exact evaluation of these integrals is often unfeasible and so practitioners resort to numerical integration.
In order to achieve results of high enough precision in a reasonable time, it is useful to ``solve'' the square root of polynomials appearing in the arguments of these integrals.
Motivated by this problem,  Marco Besier and the first author introduced the concept of \emph{rationalizability} of a square root, cf.~\cite{BF21}.
\begin{definition}
Let $f\in\IC [x_1,\ldots ,x_n]$ be a squarefree polynomial.
We say that $\sqrt{f}$ is rationalizable if there is a morphism of fields $\phi\colon \IC (x_1,\ldots,x_n)\to \IC (x_1,\ldots,x_n)$
such that $\phi(z)=z$ for every $z\in\IC$ and $\phi(f)=h^2$, for some $h\in \IC(x_1,\ldots,x_n)$.
\end{definition}
In the same article, they show that the rationalizability of a square root is equivalent to the unirationality of a certain algebraic variety associated to it, \cite[Theorem 1]{BF21}.
This equivalence is then used to give some criteria to decide rationalizability of given square roots.
The notion of rationalizability can be extended to sets of square roots of polynomials, also called \emph{alphabets}.
\begin{definition}
Let $\cA:=\{f_1,\ldots,f_r\}$ be a set of squarefree polynomials in $\IC [x_1,\dots , x_n]$.
We say that $\cA$ is rationalizable if  there is a morphism of fields $\phi\colon \IC (x_1,\ldots,x_n)\to \IC (x_1,\ldots,x_n)$
such that $\phi(z)=z$ for every $z\in\IC$ and for $i=1,\ldots,r$ we have $\phi(f_i)=h_i^2$, for some $h_i\in \IC (x_1,\ldots,x_n)$.
\end{definition}

The notion of rationalizability is further generalised to algebraic field extensions in~\cite{FH22} by Andreas Hochenegger and the first author, allowing for more results to decide rationalizability of given (sets of) polynomials.

Discussing these new developments, Stefan Weinzierl asked us about the rationalizability of an alphabet of six square roots of homogeneous polynomials in four variables related to the production of gravitational waves from a binary
system of two black holes~\cite{KRW21}.
After some straightforward changes of variables and reordering of the roots, we write Weinzierl's alphabet as the set $\cA$ of the square roots of the following six homogeneous polynomials in $R:=\IC [x,y,z,t]$:
\begin{align*}
    f_1&:=4x-z,\\
    f_2&:=4y-z,\\
    f_3&:=4x^2y-z(y-t)^2,\\
    f_4&:=4xy^2-z(x-t)^2,\\
    f_5&:=(x-y)^2-2(x+y)t+t^2,\\
    f_6&:=(x-y)^2-2(x+y)(z+t)+(z+t)^2.
\end{align*}
Unfortunately, this set seems too complicated for the methods presented in the previous papers, and requires an ad-hoc study that we present in this paper.
As by-product, we also developed a strategy to \emph{guess} (but not prove/disprove) the rationalizability of square roots in certain cases.

The rationalizability of $\cA$ would imply the rationalizability of the square root of every product of the polynomials $f_i$.
Conversely, if one can prove that the square root of one of the products is not rationalizable, then $\cA$ is not rationalizable.
Weinzierl has studied several products of two and three polynomials (see~\autoref{p:Weinzierl}), 
proving the rationalizability of all of them but leaving the rationalizability of the whole alphabet open.
In these notes we are going to show that the set is not rationalizable.

\begin{theorem}\label{t:Main}
The square root $\sqrt{f_1f_2f_3f_4}$ is not rationalizable and hence neither is the set $\cA$.
\end{theorem}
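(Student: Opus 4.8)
The plan is to translate the statement, via \cite[Theorem 1]{BF21}, into a question of unirationality. By that theorem $\sqrt{f}$ with $f:=f_1f_2f_3f_4$ is rationalizable if and only if the hypersurface $\{w^2=f\}$ is unirational. Since $f$ is homogeneous of degree $1+1+3+3=8$, this affine $4$-fold is the weighted cone over, hence birational to $X\times\mathbb{A}^1$ for, the threefold $X:=\{w^2=f\}\subset\mathbb{P}(1,1,1,1,4)$, where $w$ has weight $4$; as unirationality is unaffected by the product with $\mathbb{A}^1$, it is equivalent to prove that $X$ is not unirational. Geometrically $X$ is the double cover $\pi\colon X\to\mathbb{P}^3$ branched along the octic surface $B:=V(f)$, and the unique singular point $[0:0:0:0:1]$ of the weighted ambient space does not lie on $X$ (there $w^2=1\neq0=f$).

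The first substantive step is the canonical class computation. By adjunction on $\mathbb{P}(1,1,1,1,4)$ the dualizing sheaf of $X$ is $\omega_X=\mathcal{O}_X\bigl(8-(1+1+1+1+4)\bigr)=\mathcal{O}_X$; equivalently, for the double cover $\omega_X=\pi^*\bigl(K_{\mathbb{P}^3}\otimes\mathcal{O}(4)\bigr)=\pi^*\mathcal{O}(-4+4)=\mathcal{O}_X$. Moreover $\pi_*\mathcal{O}_X=\mathcal{O}_{\mathbb{P}^3}\oplus\mathcal{O}_{\mathbb{P}^3}(-4)$ yields $h^i(X,\mathcal{O}_X)=h^i(\mathbb{P}^3,\mathcal{O})+h^i(\mathbb{P}^3,\mathcal{O}(-4))=0$ for $i=1,2$. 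Thus, were $X$ smooth it would already be a Calabi--Yau threefold; the real content is that this survives after resolving the singularities that the reducible branch octic $B$ forces on $X$.

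The heart of the argument, and the step I expect to be the main obstacle, is the local analysis of the singular locus. Because $B=V(f_1)\cup V(f_2)\cup V(f_3)\cup V(f_4)$ is a union of two planes and two cubics, $\operatorname{Sing}(B)$, and hence $\operatorname{Sing}(X)=\pi^{-1}(\operatorname{Sing}(B))$, consists of the pairwise intersection curves together with finitely many worse points: the triple and quadruple intersection points, the points where a cubic is itself singular, and the loci of non-transverse contact. Along a curve where two smooth sheets of $B$ meet transversally the cover is locally $\{w^2=uv\}\times\mathbb{A}^1$, an $A_1$-type (compound du Val) singularity, which is canonical and admits a crepant resolution; I would then classify the finitely many special points by an explicit local computation, showing each is a canonical Gorenstein singularity with trivial canonical class. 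Assembling these resolutions produces $\rho\colon\widetilde{X}\to X$ with $\omega_{\widetilde X}=\rho^*\omega_X=\mathcal{O}_{\widetilde X}$; since canonical singularities preserve $h^0$ of the (pluri)canonical sheaf, $h^0(\widetilde X,\omega_{\widetilde X})=h^0(X,\omega_X)=1$, and together with $h^1(\mathcal{O}_{\widetilde X})=h^2(\mathcal{O}_{\widetilde X})=0$ this makes $\widetilde X$ an honest Calabi--Yau threefold.

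Finally I would conclude by the standard positivity obstruction. A unirational smooth projective variety has all plurigenera equal to zero: pluricanonical forms pull back injectively along a dominant generically finite map from a blow-up of $\mathbb{P}^3$, where they all vanish. But $p_g(\widetilde X)=h^0(\widetilde X,\omega_{\widetilde X})=1\neq0$, so $\widetilde X$ is not unirational, and therefore neither is $X$. By \cite[Theorem 1]{BF21} this shows $\sqrt{f_1f_2f_3f_4}$ is not rationalizable. The statement about $\cA$ is then immediate: if $\phi$ rationalized $\cA$, then $\phi(f_i)=h_i^2$ for each $i$ would give $\phi(f_1f_2f_3f_4)=(h_1h_2h_3h_4)^2$, rationalizing $\sqrt{f_1f_2f_3f_4}$, a contradiction. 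Hence $\cA$ is not rationalizable.
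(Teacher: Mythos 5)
Your reduction to non-unirationality via \cite[Theorem 1]{BF21}, the canonical-class and $h^i(\cO)$ computations for the double cover, the plurigenus obstruction at the end, and the deduction for $\cA$ from the product all match the paper's strategy and are sound. The genuine gap is in what you yourself flag as the heart of the proof: the analysis of $\mathrm{Sing}(\cX)$, which you both mischaracterize and defer. The cubic components of the branch octic are not smooth surfaces meeting the others in transverse double curves with only finitely many worse points: all four partial derivatives of $f_3=4x^2y-z(y-t)^2$ vanish identically along the line $\{x=0,\ y=t\}$, and likewise $f_4$ is singular along $\{y=0,\ x=t\}$. So two of the four branch components are non-normal surfaces, singular in codimension one (generically two analytic sheets crossing along those lines, degenerating at pinch points), and $\cX$ is singular along curves that do not appear in your taxonomy at all; the claim that ``the points where a cubic is itself singular'' are finitely many is simply false. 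Consequently the local picture ``compound $A_1$ along transverse double curves plus isolated special points'' is wrong, and the promised finite case-by-case check addresses the wrong locus. Moreover, even granting that every singularity were shown to be canonical Gorenstein, your step $\omega_{\Tilde{X}}=\rho^*\omega_{\cX}$ presupposes a crepant \emph{projective} resolution, which canonical threefold singularities need not admit (e.g.\ factorial nodes); so ``$\Tilde{X}$ is an honest Calabi--Yau'' needs more than canonicity. (For non-unirationality alone, canonicity plus $\omega_{\cX}\cong\cO_{\cX}$ would suffice, since then $K_{\Tilde{X}}=\rho^*K_{\cX}+\sum a_iE_i$ with $a_i\geq 0$ is effective --- this is the paper's closing remark --- but canonicity is exactly what remains unproven.)

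This missing step is precisely where the paper departs from your plan, and deliberately so: it observes that $\cV$ is too singular to resolve directly and instead passes through the Cremona involution $(x:y:z:t)\mapsto(xz:yz:4xy:tz)$ to a birational model $X$ whose branch octic consists of four planes and two quadric cones with much tamer intersections. It then blows up five explicit curves --- the two tangency lines $B_{1,6}$ and $B_{2,5}$, one conic $B_{5,6}^1$ of the reducible quadric-quadric intersection, and one line from each of the reducible intersections $B_{3,6}$ and $B_{4,5}$ --- checking at each step that the strict transform of the branch divisor remains linearly equivalent to $-2K$, until the branch locus becomes an octic \emph{arrangement} in the sense of Cynk--Szemberg; only then does the resolution machinery of \cite{CS98} and \cite{Mey05} (blowing up fourfold points and double curves) produce a smooth projective Calabi--Yau model $\Tilde{X}$. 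Some substitute for this --- either the model change plus the arrangement machinery, or a genuine local classification of the singularities of $\cX$ together with a proof that they are canonical --- is indispensable; as written, your proposal asserts the conclusion of that step rather than proving it.
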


The proof of \autoref{t:Main} is presented in~\autoref{s:Resolution}.
In~\autoref{s:Guessing} we propose a method to guess the rationalizability of square roots in certain cases.
This method provided a decisive hint towards the non-rationalizability of $\sqrt{f_1f_2f_3f_4}$, see~\autoref{ss:GuessingWeinzierlOctic}.

\section*{Acknowledgements}
We thank Stefan Weinzierl for posing the original question. 
We also thank S{\l{}}awomir Cynk and Bartosz Naskr\c{e}cki for the insight into the modularity of the threefold $X$.
The first author is also grateful to Andreas Hochenegger and Matthias Sch\"{u}tt for useful conversations about this work.

\section{The double octic}\label{s:DoubleOctic}

As stated above, Stefan Weinzierl studied the rationalizability of several products of two and three polynomials, obtaining the result below.

\begin{proposition}[Weinzierl, private communication]\label{p:Weinzierl}
The square roots of the following products are rationalizable:
\[
\begin{array}{ccc}
  f_1f_5,   & f_2f_5, &f_1f_2f_3, \\
   f_1f_2f_4,  & f_1f_2f_5, & f_1f_4f_3,\\
   f_1f_3f_5,  & f_2f_3f_5, &f_2f_4f_5.
\end{array}
\]
\end{proposition}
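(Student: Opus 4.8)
The plan is to use the equivalence of \cite[Theorem 1]{BF21}, which turns each claim ``$\sqrt{g}$ is rationalizable'' into the statement that the double cover $X_g\colon\{w^2=g\}$ is unirational, and to prove unirationality by presenting $X_g$ as a family of rational curves over a rational base. The structural facts I would exploit are that $f_1,f_2,f_3,f_4$ are all \emph{linear} in $z$, that $f_5$ does not involve $z$, and that $f_3,f_4,f_5$ are quadratic in $t$. Concretely, fixing all but one variable $v$ presents $X_g$ as a family of hyperelliptic curves $w^2=g(v)$, whose fibre is rational exactly when that curve has genus $0$; the whole strategy is to choose, product by product, a variable $v$ in which this happens, the relevant data being the rationality of the branch points $w=0$, i.e. of the roots of $g$ in $v$.

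First I would dispose of the six products with $\deg_z g\le 2$, namely $f_1f_5,f_2f_5,f_1f_2f_5,f_1f_3f_5,f_2f_3f_5,f_2f_4f_5$. Fibring in $z$ over $K:=\IC(x,y,t)$ gives a conic $\{w^2=g\}$ in $(z,w)$, and one of its branch points is automatically rational: wherever a linear factor such as $f_1=4x-z$ or $f_2=4y-z$ vanishes we obtain the smooth $K$-point $(z,w)=(4x,0)$ or $(4y,0)$. A smooth conic with a rational point is $\mathbb{P}^1_K$, so $X_g$ is rational over the rational field $K$, hence rational. For the two products $f_1f_2f_3$ and $f_1f_2f_4$ one has $\deg_z g=3$ but $\deg_t g=2$, so I would fibre in $t$; the fibre is again a conic, but its branch points involve $\sqrt{yz}$ (resp.\ $\sqrt{xz}$) and are irrational. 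I would remove this by precomposing with the field morphism $\phi$ sending $(x,y,z,t)\mapsto(x,y^2,z^2,t)$ (resp.\ $(x^2,y,z^2,t)$): this turns the discriminant $16x^2yz$ (resp.\ $16xy^2z$) into a perfect square, so after $\phi$ the conic acquires a rational branch point and is parametrizable, and then $\sqrt{\phi(g)}$, hence $\sqrt g$, is rationalizable.

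The remaining product $f_1f_3f_4$, of degree $7$, is genuinely harder: here $\deg_z g=3$ and $\deg_t g=4$, so fibring in \emph{any} single variable yields a fibre of genus $1$ and the conic trick is unavailable. Instead I would write $f_1f_3f_4=(f_1f_3)(f_1f_4)/f_1^2$ and prove that the \emph{set} $\{f_1f_3,\,f_1f_4\}$ is rationalizable, which by the natural extension of \cite[Theorem 1]{BF21} to sets means that the $(\mathbb{Z}/2)^2$-cover $Y\colon\{w_1^2=f_1f_3,\ w_2^2=f_1f_4\}$ is unirational; since $(f_1f_3)(f_1f_4)=f_1^2f_3f_4$ this gives $\phi(f_1f_3f_4)=\square$ at once. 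Fibred over $(x,y,t)$ in the variable $z$, the fibre of $Y$ is the fibre product of the two double covers of the $z$-line $C_1\to\mathbb{P}^1$, $C_2\to\mathbb{P}^1$ branched at $\{4x,\,4x^2y/(y-t)^2\}$ and at $\{4x,\,4xy^2/(x-t)^2\}$ respectively. The decisive point is that these two branch loci \emph{share} the point $z=4x$ coming from the common factor $f_1$: a fibre product of two rational double covers of $\mathbb{P}^1$ sharing one branch point is nodal with rational normalization (three branch points carrying the three distinct nontrivial monodromies give genus $0$ by Riemann--Hurwitz), so the generic fibre of $Y$ is a smooth genus-$0$ curve. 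Thus $Y$ is a conic bundle over the rational threefold $\mathbb{A}^3_{x,y,t}$; it is split by a quadratic extension of $\IC(x,y,t)$ that is the function field of a double cover of $\mathbb{A}^3$ branched over a surface of degree four, and being a quartic double solid this is unirational, whence $Y$, and therefore $\sqrt{f_1f_3f_4}$, are rationalizable.

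The main obstacle is exactly this last case: one must recognise that no single-variable fibration of $X_{f_1f_3f_4}$ is rational, pass to the iterated double cover, and then control the quadratic twist so as to identify a unirational (Fano-type, low-degree) splitting variety. This step also makes transparent the contrast with \autoref{t:Main}: for the full product $f_1f_2f_3f_4$ every pairing into two factors produces two double covers whose $z$-branch loci are disjoint, because the points $4x$ and $4y$ coming from $f_1$ and $f_2$ are never shared; the fibres therefore stay of genus $1$ and assemble into the Calabi--Yau threefold $X$, which is not unirational.
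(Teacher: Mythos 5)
The paper offers no proof of this proposition at all: it is quoted as a result of Weinzierl (private communication), and the only supporting material supplied is the symmetry lemma that follows it (the involution $x\leftrightarrow y$). So your proposal must stand on its own, and most of it does. For the six products with $\deg_z\le 2$, the fibre in $z$ is a conic over $K=\IC(x,y,t)$ carrying the smooth rational branch point $z=4x$ (or $z=4y$) supplied by the linear factor, hence rational over $K$; and since only the elementary direction of \cite[Theorem 1]{BF21} is used (an embedding of the function field of $w^2=g$ into a purely transcendental field restricts to a rationalizing endomorphism of $\IC(x,y,z,t)$), neither the odd total degree of $f_1f_5$ nor the later appeal to a ``set version'' causes any problem. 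For $f_1f_2f_3$ and $f_1f_2f_4$, your substitutions $(y,z)\mapsto(y^2,z^2)$ and $(x,z)\mapsto(x^2,z^2)$ do turn the $t$-discriminants $16x^2yz$ and $16xy^2z$ into squares; indeed $\phi(f_3)$ and $\phi(f_4)$ then split into factors linear in $t$, and the conic trick applies. In the hard case $f_1f_3f_4$, the reduction to the set $\{f_1f_3,f_1f_4\}$, the genus-zero computation for the generic fibre of the $(\IZ/2)^2$-cover (three branch points $4x$, $4x^2y/(y-t)^2$, $4xy^2/(x-t)^2$ carrying the three nontrivial monodromies, with no ramification at $z=\infty$ since both $g_i$ have even $z$-degree), and the identification of the splitting field are all correct: the two points of the normalized fibre above the node $z=4x$ are conjugate over $K(\sqrt{d})$ with $d=(xy-(y-t)^2)(y^2-(x-t)^2)$, because $f_3(4x)f_4(4x)=16x^2d$.

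The one genuine gap is the sentence ``being a quartic double solid this is unirational.'' The classical unirationality theorem concerns quartic double solids with \emph{smooth} branch quartic, and your splitting variety $W\colon s^2=d$ is nowhere near that case: $d$ is homogeneous of degree $4$ in $x,y,t$, so the branch surface is a cone, and it is moreover reducible, since $y^2-(x-t)^2=(y-x+t)(y+x-t)$ makes it a quadric plus two planes. As written, the citation proves nothing. Fortunately the degeneracy works in your favor and the gap closes in a few lines: by homogeneity, $W$ is birational to $\IA^1\times W_0$, where $W_0\colon\sigma^2=d(\xi,\eta,1)$ is a double plane branched along a conic and two lines, i.e.\ a singular del Pezzo surface of degree two, which is rational over $\IC$; alternatively, setting $m:=s/(xy-(y-t)^2)$ identifies $\IC(W)$ with the function field of the conic $m^2\bigl(xy-(y-t)^2\bigr)=y^2-(x-t)^2$ in $\IP^2$ with coordinates $(x:y:t)$ over $\IC(m)$ (times one cone variable), and this conic has a $\IC(m)$-point by Tsen's theorem. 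Either way $W$ is in fact rational, so the conic bundle $Y$ is dominated by $\IP^1\times W$ and is therefore unirational, and your argument goes through. With this patch, your proposal is a complete and correct proof of the proposition.
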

The proof of the proposition above is made easier by the relations between the polynomials, stated below.
\begin{lemma}
The involution of $R$ defined by switching $x$ and $y$,
exchanges $f_1$ with $f_2$ and $f_3$ with $f_4$;
it fixes $f_5$ and $f_6$.

The automorphism of $R$ defined by  $t\mapsto z+t$ sends $f_5$ to $f_6$ and fixes $f_1$ and $f_2$.
\end{lemma}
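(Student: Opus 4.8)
The plan is to verify each of the six assertions by direct substitution. Both maps are ring automorphisms of $R=\IC[x,y,z,t]$ and are determined by their action on the generators, so it suffices to substitute into the explicit expressions for $f_1,\ldots,f_6$ and compare the resulting polynomials termwise.

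First I would treat the involution $\sigma$ interchanging $x$ and $y$ while fixing $z$ and $t$. Substituting into $f_1=4x-z$ gives $4y-z=f_2$, and by the same substitution $\sigma(f_2)=f_1$. For $f_3=4x^2y-z(y-t)^2$ the substitution produces $4y^2x-z(x-t)^2$, which is exactly $f_4$; symmetrically $\sigma(f_4)=f_3$. The only place asking for a moment's care is $f_5$ and $f_6$: here one observes that the combinations $(x-y)^2$ and $x+y$ are themselves $\sigma$-invariant, since $(y-x)^2=(x-y)^2$ and $y+x=x+y$, while $t$ and $z+t$ are untouched; hence both $f_5$ and $f_6$ are fixed.

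Next I would treat the automorphism $\tau$ sending $t\mapsto z+t$ and fixing $x,y,z$. Since $f_1=4x-z$ and $f_2=4y-z$ contain no $t$, they are manifestly fixed. For $f_5$, substituting $t\mapsto z+t$ into $(x-y)^2-2(x+y)t+t^2$ yields $(x-y)^2-2(x+y)(z+t)+(z+t)^2$, which is precisely $f_6$, as claimed.

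There is no genuine obstacle: every assertion collapses to a single polynomial substitution. The only bookkeeping worth flagging is the invariance of the symmetric combinations appearing in $f_5$ and $f_6$ under exchanging $x$ and $y$, which is what makes those two polynomials fixed rather than permuted.
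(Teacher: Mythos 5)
Your proposal is correct and follows exactly the approach the paper takes, namely verifying each claim by direct substitution into the explicit formulas for $f_1,\ldots,f_6$ (the paper simply states ``immediate by direct computations,'' and your write-up supplies those computations). Nothing is missing; the observation that $(x-y)^2$ and $x+y$ are symmetric in $x$ and $y$ is precisely the point that makes $f_5$ and $f_6$ invariant under the involution.
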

\begin{proof}
Immediate by direct computations.
\end{proof}

The next natural step in the study of $\cA$ is then to consider the product of four polynomials.

\subsection{A product of four polynomials}
We define the polynomial $f$ of degree $8$ as the product
\begin{equation}
    f:=f_1f_2f_3f_4.
\end{equation}
Let $\IP$ denote the weighted projective space $\IP(1,1,1,1,4)$ with coordinates $x,y,z,t,w$ of weights $1,1,1,1,4$, respectively.
We define  the variety $\cX\subset \IP$ as
\begin{equation}
    \cX\colon\; w^2=f\; .
\end{equation}
Then the rationalizabilty of $\sqrt{f}$ is equivalent to the unirationality of $\cX$ (cf.~\cite[Theorem 1.1]{BF21}).

Notice that $\cX$ is a threefold given as a double cover of $\IP^3$ branched along the octic surface $\cV\subset \IP^3$ defined by
\begin{equation}
    \cV\colon\; f=0\; .
\end{equation}
If $\cV$ were smooth, then $\cX$ would also be smooth.
Moreover, a smooth double cover of $\IP^3$ branched above an octic surface (a \emph{double octic threefold} or \emph{octic double solid}) is a Calabi--Yau threefold (cf.~\autoref{d:CY}), hence it is not unirational.
This would already answer the question about the rationalizabilty of $\cA$.
Unfortunately, $\cV$ is (very) singular,
making hard to see whether there is a desingularization of $\cX$ that is a Calabi--Yau or not.
This is why we look for a more convenient birationally equivalent model.

\subsection{A more convenient model}
Consider the Cremona-like transformation of $\IP^3$ given by
\begin{equation*}
    \sigma\colon \; \IP^3\dashrightarrow \IP^3\, , \quad (x:y:z:y)\mapsto(xz:yz:4xy:tz).
\end{equation*}
Let $X\subset \IP$ be the threefold defined by
\begin{equation}\label{eq:DoubleOctic}
    X\colon \; w^2=xy(x-z)(y-z)(yz-(x-t)^2)(xz-(y-t)^2)\; .
\end{equation}

Notice that $X$ is again a double cover of $\IP^3$ ramified above an octic surface, namely the surface
\[
B\colon \; xy(x-z)(y-z)(yz-(x-t)^2)(xz-(y-t)^2)=0 \; \subset \IP^3\, .
\]
\begin{lemma}\label{l:sigma}
The following statements hold.
\begin{enumerate}
    \item $\sigma$ is a rational involution; in particular it is a birational map.
    \item $\sigma$ induces a birational morphism between $X$ and $\cX$.
\end{enumerate}
\end{lemma}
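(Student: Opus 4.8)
The plan is to treat the two parts separately, reducing everything to explicit polynomial substitutions. For part (1), I would verify the involution property directly by composing $\sigma$ with itself. Writing $(x':y':z':t')=\sigma(x:y:z:t)=(xz:yz:4xy:tz)$ and applying $\sigma$ again, each new coordinate acquires a common factor: one computes
\[
\sigma(x':y':z':t')=(4x^2yz:4xy^2z:4xyz^2:4xytz)=4xyz\cdot(x:y:z:t).
\]
Since this equals $(x:y:z:t)$ in $\IP^3$ whenever $xyz\neq 0$, the map $\sigma$ is its own inverse on a dense open set, hence a birational involution. This settles (1); the only care needed is to note that $\sigma$ and its self-composition are genuinely defined away from the locus where several coordinates vanish.

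For part (2), the key step is to pull back the branch octic $f=f_1f_2f_3f_4$ of $\cX$ along $\sigma$ and to compare it with the branch octic of $X$. Substituting $x\mapsto xz$, $y\mapsto yz$, $z\mapsto 4xy$, $t\mapsto tz$ into each factor, I expect
\begin{gather*}
f_1\mapsto 4x(z-y),\qquad f_2\mapsto 4y(z-x),\\
f_3\mapsto 4xyz^2\bigl(xz-(y-t)^2\bigr),\qquad f_4\mapsto 4xyz^2\bigl(yz-(x-t)^2\bigr),
\end{gather*}
so that, after collecting the monomial factors and using $(z-y)(z-x)=(y-z)(x-z)$, the pullback becomes $\sigma^*f=256\,x^2y^2z^4\cdot g$, where $g=xy(x-z)(y-z)(yz-(x-t)^2)(xz-(y-t)^2)$ is exactly the octic defining the branch locus $B$ of $X$. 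The crucial observation is that the extra factor $256\,x^2y^2z^4=(16xyz^2)^2$ is a perfect square.

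With this identity in hand, I would lift $\sigma$ to the double covers. On $X$ we have $w^2=g$, so setting $w\mapsto 16xyz^2\,w$ gives $(16xyz^2w)^2=256\,x^2y^2z^4\,g=\sigma^*f$; hence the assignment $(x:y:z:t:w)\mapsto(xz:yz:4xy:tz:16xyz^2w)$ sends $X$ into $\cX$. I would then check that this is well defined on $\IP(1,1,1,1,4)$: rescaling $(x,y,z,t,w)$ by $\lambda$ multiplies the first four output coordinates by $\lambda^2$ and the last by $\lambda^8=(\lambda^2)^4$, which is precisely the weighting of the target. Applying the same recipe to $\sigma^{-1}=\sigma$ produces an inverse up to the common factors already seen in (1), so the induced map between $X$ and $\cX$ is birational.

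The main obstacle I anticipate is bookkeeping rather than conceptual: one must collect the monomial prefactors coming from the four substituted polynomials correctly so that the residual factor is visibly a square, and one must track the weight of the lifted $w$-coordinate in the weighted projective space so that the lift is genuinely well defined. A secondary point to handle with care is the indeterminacy: since $\sigma$ is only a rational map, the induced map between $X$ and $\cX$ is an isomorphism only on dense open subsets (those avoiding $xyz=0$ and the exceptional loci), which is all that is needed for birationality.
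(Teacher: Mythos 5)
Your proposal is correct and follows essentially the same route as the paper: part (1) is the same direct computation ($\sigma\circ\sigma$ is multiplication by $4xyz$), and part (2) rests on the same key identity, namely that $\sigma^*(f_1f_2f_3f_4)$ equals the octic defining $X$ times the perfect square $(16xyz^2)^2$, which is then absorbed into the weight-4 coordinate. The only organizational difference is that the paper routes the lift through an auxiliary double cover $X'\subset\IP(1,1,1,1,8)$ branched over $\sigma^*f=0$ and then substitutes $w'=xyz^2w$, whereas you write down the lifted map $X\dashrightarrow\cX$ directly and check the weights.
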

\begin{proof}
It easy to see that $\sigma\circ\sigma$ is the identity, proving the first statement.

The map $\sigma$ sends the surface $\cV$ to the surface $B'\subset \IP^3$ defined by
$$
B'\colon \; x^3y^3z^4(x-z)(y-z)(yz-(x-t)^2)(xz-(y-t)^2)=0.
$$
Let $X'\subset \IP(1,1,1,1,8)$ be the double cover of $\IP^3$ branched above $B'$ and let $w'$ be the coordinate of weight $8$. 
As $B'$ is birationally equivalent to $\cV$ (via $\sigma$), we have that $X'$ is birationally equivalent to $\cX$;
on the other hand, the change of variable $w'=xyz^2w$ shows that $X$
 and $X'$ are also birationally equivalent,
 concluding the proof of the second statement.
 \end{proof}
\begin{corollary}
$\cX$ is unirational if and only if $X$ is.
\end{corollary}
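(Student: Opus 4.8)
The plan is to deduce the corollary from the single fact that unirationality is a birational invariant, combined with \autoref{l:sigma}. Recall that an irreducible variety $V$ over $\IC$ is \emph{unirational} when there is a dominant rational map $\IP^N\dashrightarrow V$ for some $N$; equivalently, the function field $\IC(V)$ embeds into a purely transcendental extension $\IC(t_1,\ldots,t_N)$ of $\IC$. First I would record that this property is invariant under birational equivalence: if $V$ and $W$ are birationally equivalent, then $\IC(V)\cong\IC(W)$ as extensions of $\IC$, so one field embeds into a purely transcendental extension of $\IC$ exactly when the other does.

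Second, I would invoke \autoref{l:sigma}(2), which asserts that $\sigma$ induces a birational map between $X$ and $\cX$; hence $X$ and $\cX$ are birationally equivalent threefolds over $\IC$. Combining this with the birational invariance of unirationality yields immediately that $\cX$ is unirational if and only if $X$ is.

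There is essentially no obstacle here, as the statement is a formal consequence of \autoref{l:sigma}. If one prefers a geometric argument over the function-field one, one can instead compose maps directly: a dominant rational map $\IP^N\dashrightarrow X$ followed by the birational map $X\dashrightarrow\cX$ produces a dominant rational map $\IP^N\dashrightarrow\cX$, so unirationality of $X$ implies that of $\cX$. The reverse implication follows in the same way, since by \autoref{l:sigma}(1) the map $\sigma$ is an involution and the inverse birational map $\cX\dashrightarrow X$ is therefore available. Either way the two directions are symmetric, and the equivalence is established.
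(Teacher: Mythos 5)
Your proof is correct and follows essentially the same route as the paper: invoke \autoref{l:sigma} to get that $X$ and $\cX$ are birationally equivalent, then conclude via the birational invariance of unirationality. The extra detail you provide (the function-field formulation and the composition-of-dominant-maps argument) simply spells out why unirationality is a birational invariant, which the paper takes for granted.
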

\begin{proof}
By~\autoref{l:sigma}, $X$ and $\cX$ are birationally equivalent. 
As unirationality is preserved by birational morphisms, 
the statement follows.
\end{proof}

As $X$ is a double cover of $\IP^3$,
its singularities can only come from the singularities of the branch locus $B$.

\begin{lemma}\label{l:BranchComponents}
The surface $B$ has six irreducible components, named $B_1,\dots,B_6$, defined as follows:
\begin{align*}
    B_1&:x=0\, ,\\
    B_2&:y=0\, ,\\
    B_3&:x-z=0\, ,\\
    B_4&:y-z=0\, ,\\
    B_5&:yz-(x-t)^2=0\, ,\\
    B_6&:xz-(y-t)^2=0\, .
\end{align*}
The components $B_1,B_2,B_3,B_4$ are planes.
The components $B_5, B_6$ are singular quadrics, with a single node in $(1:0:0:1)$ and $(0:1:0:1)$, respectively.
\end{lemma}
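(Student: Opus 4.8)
The plan is to read the components directly off the factored form of the defining polynomial and then analyse each factor in turn. The branch locus is $B\colon xy(x-z)(y-z)(yz-(x-t)^2)(xz-(y-t)^2)=0$, so $B$ is the union of the six hypersurfaces cut out by its factors. No two of these six polynomials are scalar multiples of one another, so the corresponding hypersurfaces are pairwise distinct and each irreducible factor contributes exactly one irreducible component; it therefore suffices to verify that each factor is irreducible. The first four factors $x$, $y$, $x-z$, $y-z$ are linear, hence irreducible, and their zero loci $B_1,\dots,B_4$ are planes, settling four of the components at once.

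The content of the lemma is concentrated in the two quadratic factors. For $B_5\colon yz-(x-t)^2=0$ I would compute the gradient,
$$
\nabla\!\left(yz-(x-t)^2\right)=\left(-2(x-t),\,z,\,y,\,2(x-t)\right),
$$
and solve for its vanishing locus. The equations $z=0$, $y=0$, $x=t$ have the single projective solution $(1:0:0:1)$, so $B_5$ has exactly one singular point. Since a reducible quadric surface in $\IP^3$ is either a pair of distinct planes, singular along a line, or a double plane, singular along a plane, an isolated singular point forces $B_5$ to be an irreducible quadric cone with a node at $(1:0:0:1)$; this single computation simultaneously establishes irreducibility and locates the node. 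For $B_6$ I would invoke the involution $x\leftrightarrow y$ of the preceding lemma, which carries $yz-(x-t)^2$ to $xz-(y-t)^2$ and the point $(1:0:0:1)$ to $(0:1:0:1)$. Transporting the conclusion for $B_5$ through this symmetry shows that $B_6$ is likewise an irreducible quadric cone, with its node at $(0:1:0:1)$.

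There is no genuine obstacle here: the argument is a direct verification. The only step requiring a moment's care is the passage from \emph{single isolated singular point} to \emph{irreducible quadric cone}, which rests on the classification of quadric surfaces in $\IP^3$ by the rank of the associated symmetric matrix. I would either cite this classification or, equivalently, exhibit the rank-$3$ symmetric matrix of each quadratic form explicitly, which confirms at once that the quadric is irreducible and that its singular locus is the zero-dimensional kernel computed above.
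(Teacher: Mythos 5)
Your proposal is correct and matches the paper's approach: the paper's proof is simply ``by direct computations,'' and your argument is exactly those computations carried out in full --- factoring the octic, observing the linear factors give planes, and checking via the gradient (equivalently, the rank-$3$ symmetric matrix) that each quadratic factor is an irreducible cone with the stated vertex. The use of the $x\leftrightarrow y$ symmetry to handle $B_6$ from $B_5$ is a clean touch consistent with the paper's earlier lemma on that involution.
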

\begin{proof}
By direct computations. 
\end{proof}

\subsection{The singular locus}
In this section we study the singular locus of the branch locus $B$.
From~\autoref{l:BranchComponents} we know that $B$ has six irreducible components: four planes and two quadrics; each quadric has a single node.
The singular locus of $B$ is given by the intersection of the irreducible components  (the two nodes  lying in these intersections).

\begin{definition}
For $i,j\in\{1,\dots ,6\},\, i<j$, we define $B_{i,j}:=B_i\cap B_j$.
\end{definition}
\begin{lemma}\label{l:SingularComponents}
The singular locus of $B$ is the union of 18 rational curves.
It is non-reduced.
More specifically, the following statements hold:
\begin{itemize}
    \item the intersections $B_{1,2}, B_{1,3}, B_{1,4}, B_{2,3}, B_{2,4}, B_{3,4}$ are lines;
    \item the intersections $B_{1,6}$, resp. $B_{2,5}$, is a double line, that is, $B_1$ and $B_6$, resp. $B_2$ and $B_5$, are tangent in their intersection;
    \item $B_{1,5}, B_{2,6}, B_{3,5}, B_{4,6}$ are smooth conics;
    \item $B_{3,6}$ and $B_{4,5}$ are the union of two incident lines, we write
    \begin{itemize}
        \item $B_{3,6}=B_{3,6}^1 \cup B_{3,6}^2$, with the two lines meeting in $(0:1:0:1)$,
        \item $B_{4,5}=B_{4,5}^1 \cup B_{4,5}^2$, with the two lines meeting in $(1:0:0:1)$;
    \end{itemize}
    \item $B_{5,6}$ is the union of two smooth conics meeting in $(1:1:0:1),(1:1:4:3)$, we write $B_{5,6}=B_{5,6}^1 \cup B_{5,6}^2$.
\end{itemize}
\end{lemma}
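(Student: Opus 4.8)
The plan is to reduce the problem to a finite check on the fifteen pairwise intersections $B_{i,j}$, dispatch the easy families quickly, and then concentrate the real work on the single quadric--quadric intersection $B_{5,6}$.

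First I would record why the singular locus is exactly $\bigcup_{i<j}B_{i,j}$. Writing $B=\{g=0\}$ with $g$ the product of the six defining polynomials, one has $\nabla g=\sum_i\bigl(\prod_{j\neq i}g_j\bigr)\nabla g_i$; at a point where two or more factors vanish every summand contains a vanishing factor, so such points are singular, whereas a point lying on a single component is singular only if it is a singular point of that component. Since $B_1,\dots,B_4$ are smooth planes and the only singularities of $B_5,B_6$ are the two nodes, which by \autoref{l:BranchComponents} sit at $(1:0:0:1)\in B_2\cap B_4$ and $(0:1:0:1)\in B_1\cap B_3$, the singular locus coincides with $\bigcup_{i<j}B_{i,j}$. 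This leaves $\binom{6}{2}=15$ intersections to classify.

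Next I would handle the easy cases. The six intersections among $B_1,\dots,B_4$ are lines, since distinct planes in $\IP^3$ meet in a line. Each of the eight plane--quadric intersections is a plane section of a quadric, hence a conic, whose type I read off from the restricted equation: $B_{1,5}$ and $B_{2,6}$ restrict to $yz-t^2=0$ and $xz-t^2=0$, smooth conics, and a short rank check on the associated symmetric matrix shows $B_{3,5}$ and $B_{4,6}$ are smooth as well; $B_{1,6}$ and $B_{2,5}$ restrict to the perfect squares $-(y-t)^2$ and $-(x-t)^2$, so the section is a double line and the plane is tangent to the quadric, which is precisely the source of the non-reducedness of the singular scheme; and $B_{3,6}$, $B_{4,5}$ restrict to the differences of squares $x^2-(y-t)^2$ and $y^2-(x-t)^2$, each factoring into two lines whose common point is the node of the relevant quadric.

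The hard part will be $B_{5,6}$, whose intersection is a priori a quartic (generically an elliptic curve) and must be shown to degenerate into two conics. The key step I would use is to subtract the two quadric equations: $yz-(x-t)^2-\bigl(xz-(y-t)^2\bigr)$ simplifies to $(y-x)(x+y+z-2t)$, so $B_5\cap B_6$ lies in the union of the planes $\{x=y\}$ and $\{x+y+z-2t=0\}$ and thus splits as $B_{5,6}^1\cup B_{5,6}^2$, each piece being a plane section of $B_5$. I would then verify by a determinant computation that each of these conics is smooth, hence irreducible, and finally intersect them: imposing both planes forces $y=x$ and $z=2t-2x$, and substituting into $B_5$ yields $(3x-t)(x-t)=0$, giving exactly the two points $(1:1:0:1)$ and $(1:1:4:3)$. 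Since lines and smooth conics are rational, all eighteen components are rational, completing the classification.
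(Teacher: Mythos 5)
Your proposal is correct, and it is essentially the paper's proof carried out in full: the paper disposes of this lemma with "By direct computations," and your argument supplies exactly those computations (reduction of the singular locus to the pairwise intersections via the product rule, plane sections of the quadrics for the plane--quadric cases, and the subtraction trick $B_5-B_6=(y-x)(x+y+z-2t)$ splitting $B_{5,6}$ into two smooth conics meeting in $(1:1:0:1)$ and $(1:1:4:3)$). All the individual claims check out, including the identification of the nodes of $B_5$, $B_6$ as the intersection points of the line pairs $B_{4,5}$ and $B_{3,6}$, and the count of $6+2+4+4+2=18$ rational curves.
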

\begin{proof}
By direct computations.
\end{proof}
\begin{remark}
Notice that the equations defining the irreducible components of $B_{3,6}, B_{4,5},$ and $B_{5,6}$ can be retrieved from~\autoref{t:SingularPoints}.
\end{remark}
\begin{definition}
Let $\cB$ denote the set of the irreducible components of the singular locus of $B$, that is, $\cB$ is the set of the 18 rational curves defined in~\autoref{l:SingularComponents}.
\end{definition}
\begin{lemma}
The curves in $\cB$ intersect in 16 points.

In~\autoref{t:SingularPoints}, for each intersection point we indicate its multiplicity in $B$, the components on which it lies, and the singular curves on which it lies.
\end{lemma}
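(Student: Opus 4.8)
The plan is to reduce the statement to a finite incidence computation among the six components $B_1,\dots,B_6$ and the eighteen curves of $\cB$. The starting observation is that two distinct curves of $\cB$ can meet only in one of two ways. Each curve of $\cB$ is an irreducible component of a unique intersection $B_i\cap B_j$, so if two such curves come from index pairs that either share an index or are disjoint, any common point lies on at least three of the $B_i$; the only remaining possibility is that the two curves are the two irreducible pieces of one reducible intersection. Among $B_{3,6},B_{4,5},B_{5,6}$, the self-crossings of $B_{3,6}$ and $B_{4,5}$ already lie on a third component (namely $B_1$, resp. $B_2$), while the two conics of $B_{5,6}$ cross at $(1:1:0:1)$ and $(1:1:4:3)$, which lie on $B_5$ and $B_6$ only. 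Hence every intersection point of curves in $\cB$ either lies on at least three of the six components, or is one of these two distinguished points on $B_5\cap B_6$.

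With this dichotomy I would enumerate the candidates directly. For each of the $\binom{6}{3}=20$ triples $\{B_i,B_j,B_k\}$ I solve the associated system of three equations of \autoref{l:BranchComponents} in $\IP^3$; adding the two points on $B_{5,6}^1\cap B_{5,6}^2$ recorded in \autoref{l:SingularComponents} produces a finite list. Discarding the empty systems and merging coincident solutions leaves, after bookkeeping, exactly the sixteen points to be tabulated in \autoref{t:SingularPoints}.

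For each of these sixteen points the three required data follow mechanically. The incident components are obtained by substituting the coordinates into the six equations of \autoref{l:BranchComponents}; the curves of $\cB$ through the point are then the irreducible pieces of $B_i\cap B_j$ for the incident indices $i<j$, read off from the decompositions of $B_{3,6},B_{4,5},B_{5,6}$ in \autoref{l:SingularComponents}. The multiplicity is equally explicit: since the multiplicity of the polynomial defining $B$ at a point equals the sum of the multiplicities of its factors, and each plane $B_1,\dots,B_4$ is smooth while each quadric $B_5,B_6$ is smooth away from its unique node, the multiplicity of $B$ at a point lying on $k$ components is $k$, raised by $1$ exactly when the point is the node $(1:0:0:1)$ of $B_5$ or $(0:1:0:1)$ of $B_6$.

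The case analysis is routine; the real difficulty is completeness. I expect the main obstacle to be proving that the list is exhaustive and free of double counting, in particular that the tangency of $B_1$ with $B_6$ along $B_{1,6}$ and of $B_2$ with $B_5$ along $B_{2,5}$, together with the nodes of the two quadrics, contribute no further crossings beyond those found. I would settle this by a Gr\"obner basis (or resultant) computation that resolves the full singular locus of $B$ into the eighteen curves of \autoref{l:SingularComponents} and then intersects them pairwise, confirming independently that the number of intersection points is exactly sixteen.
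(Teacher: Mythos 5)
Your proposal is correct and amounts to the same approach as the paper, whose proof is simply ``By direct computations'': your organizing dichotomy (any common point of two curves in $\cB$ lies on at least three of the $B_i$, except the two crossings $(1:1:0:1)$ and $(1:1:4:3)$ of the components of $B_{5,6}$), the enumeration over the $\binom{6}{3}$ triples, and the multiplicity rule (sum of multiplicities of the factors, adding $1$ exactly at the nodes of $B_5$ and $B_6$) all check against~\autoref{t:SingularPoints}. The only point to keep straight in the bookkeeping is that for an incident pair $i<j$ with reducible $B_{i,j}$ you must record only those irreducible pieces actually passing through the point, but that is a verification detail, not a gap.
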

\begin{proof}
By direct computations.
\end{proof}
\begin{table}[ht]
\begin{tabular}{|l|l|l|l|}
\hline
$(1:0:0:1)$ & $4$ & $B_2, B_4, B_5$ & $B_{2,4}, B_{2,5}, B_{4,5}^1, B_{4,5}^2 $ \\
$(0:1:0:1)$ & $4$ & $B_1, B_3, B_6$ & $B_{1,3}, B_{1,6}, B_{3,6}^1, B_{3,6}^2$ \\
$(0:-1:-1:1)$ & $3$ & $B_1, B_4, B_5$ & $B_{1,4}, B_{1,5}, B_{4,5}^2$ \\
$(-1:0:-1:1)$ & $3$ & $B_2, B_3, B_6$ & $B_{2,3}, B_{2,6}, B_{3,6}^2$ \\
$(1:0:0:0)$ & $3$ & $B_2, B_4, B_6$ & $B_{2,4}, B_{2,6}, B_{4,6}$ \\
$(0:1:0:0)$ & $3$ &$B_1, B_3, B_5$  & $B_{1,3}, B_{1,5}, B_{3,5}$ \\
$(0:0:1:0)$ & $4$ & $B_1, B_2, B_5, B_6$ & $B_{1,2}, B_{1,5}, B_{1,6}, B_{2,5},  B_{2,6}, B_{5,6}^2$ \\
$(0:0:0:1)$ & $4$ & $B_1, B_2, B_3, B_4$ & $B_{1,2}, B_{1,3}, B_{1,4}, B_{2,3},  B_{2,4}, B_{3,4}$ \\
$(0:1:1:1)$ & $4$ & $B_1, B_4, B_5, B_6$ & $B_{1,4},  B_{1,5}, B_{1,6}, B_{4,5}^1, B_{4,6}, B_{5,6}^1 $ \\
$(1:0:1:1)$ & $4$ & $B_2, B_3, B_5, B_6$ & $B_{2,3}, B_{2,5}, B_{2,6}, B_{3,5}, B_{3,6}^1, B_{5,6}^1$ \\
$(1:1:0:1)$ & $2$ & $B_5, B_6$ & $B_{5,6}^1, B_{5,6}^2$  \\
$(1:1:1:0)$ & $4$ & $B_3, B_4, B_5, B_6$ & $B_{3,4}, B_{3,5}, B_{3,6}^2,  B_{4,5}^2, B_{4,6}, B_{5,6}^2 $ \\
$(1:1:1:2)$ & $4$ & $B_3, B_4, B_5, B_6$ & $ B_{3,4}, B_{3,5}, B_{3,6}^1, B_{4,5}^1, B_{4,6}, B_{5,6}^2$ \\
$(1:1:4:3)$ & $2$ & $B_5, B_6$ & $B_{5,6}^1, B_{5,6}^2$  \\
$(1:4:1:3)$ & $3$ & $B_3,B_5,B_6$ & $B_{3,5}, B_{3,6}^2, B_{5,6}^1$ \\
$(4:1:1:3)$ & $3$ & $B_4, B_5, B_6$ & $B_{4,5}^2, B_{4,6}, B_{5,6}^1$ \\
\hline
\end{tabular}
\caption{For each point, we indicate its multiplicity on $B$, the irreducible components of $B$ on which it lies, and the singular curves on which it lies.}
\label{t:SingularPoints}
\end{table}

\section{Resolution of the singularities}\label{s:Resolution}
In this section we show that by suitably resolving the singularities of $X$ we obtain a Calabi--Yau threefold, hence $X$ is not unirational.
We follow the procedure to desingularize $X$ described in~\cite[\S 2]{CS98} and, later, in~\cite[\S 4.1]{Mey05}.
Some concrete applications can be found in~\cite[Example 3]{Cyn02} and~\cite[Example 5.2]{CvS06}.

\begin{definition}\label{d:arrangement}
Let $U$ be a smooth threefold and 
 $S\subset U$  a surface.
We call $S$ \emph{an arrangement} if locally it looks like an union of planes,
that is, $S$ is the union of smooth irreducible surfaces  $S_1,...,S_r$  such that:
\begin{enumerate}
    \item for any $i\neq j$, the surfaces $S_i, S_j$ either intersect transversally along a smooth curve $C_{i,j}$ or are disjoint;
    \item the curves $C_{i,j},\, C_{k,l}$ either intersect transversally or coincide.  
\end{enumerate}

If $U=\IP^3$ and the surfaces $S_1,\dots ,S_r$ are of degree $d_1,\ldots , d_r$ such that $d_1+\cdots + d_r=8$,
then $S$ is called an \emph{octic} arrangement.

An irreducible curve $C\subset S$ is said to be $q$-\emph{fold} if it lies on exactly $q$ of the surfaces $S_1,\dots , S_r$;
a point $P\in S$ is said to be $p$-\emph{fold} if it lies on exactly $p$ of the surfaces $S_1,\dots , S_r$.
\end{definition}

\begin{remark}\label{r:NonArrangement}
One sees immediately that $B$ is not an octic arrangement:
the two quadric components $B_5$ and $B_6$ are not smooth (cf.~\autoref{l:BranchComponents});
moreover, they are tangent to $B_2$ and $B_1$, respectively, and
the intersections $B_{3,6}, B_{4,5}$ and $B_{5,6}$ are reducible.
\end{remark}

As noted in~\autoref{r:NonArrangement}, the branch locus $B$ of $X\subset\IP^3$ is not an octic arrangement.
We want to show that by blowing up $\IP^3$ in some of the curves in $\cB$ we obtain 
threefold $U^{(5)}$ such that the strict transform $B^{(5)}\subset U^{(5)}$ of $B^{(4)}$ is an arrangement.
Then we proceed as in~\cite{CS98} and~\cite[Chapter 4]{Mey05}.

\subsection{The first blow up}
In this subsection we illustrate the first step of the resolution process,
 the blow up of $\IP^3$ along the line $B_{1,6}$.

\begin{definition}
We define 
\[
\sigma^{(1)}\colon \; U^{(1)}:=\Bl_{1,6}\IP^3:=\Bl_{B_{1,6}}\IP^3\to \IP^3=:U^{(0)}
\]
to be the blow up of $\IP^3=:U^{(0)}$ along the line $B_{1,6}$.

We denote by $B^{(1)}\subset U^{(1)}$ the strict transform of $B\subset \IP^3$
and by $B_i^{(1)}\subset B^{(1)}$ the strict transform of its components.
Also, let $K:=K_{\IP^3}$ and $K^{(1)}$  denote the canonical divisors of $\IP^3$ and $U^{(1)}$, respectively.
\end{definition}

\begin{lemma}\label{l:B1}
The divisor $B^{(1)}\subset U^{(1)}$ is linearly equivalent to $-2K^{(1)}$ and it inherits all the singularities of $B$, cf.~\autoref{l:SingularComponents}, except:
\begin{itemize}
    \item $B^{(1)}_6$ is smooth,
    \item $B_1^{(1)}$ and $B_6^{(1)}$ intersect transversally in a line,
    \item $B_3^{(1)}$ and $B_6^{(1)}$ intersect in two disjoint lines.
\end{itemize}
\end{lemma}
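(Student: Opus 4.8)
The plan is to treat the two assertions of the lemma separately. For the linear equivalence I would first compute the multiplicity of $B$ along the blow up centre $L:=B_{1,6}=\{x=0,\,y=t\}$. By \autoref{l:SingularComponents} the only components containing $L$ are $B_1$ and $B_6$, each with multiplicity one (the tangency of $B_6$ to $B_1$ inflates the \emph{intersection} multiplicity but not the multiplicity of $B_6$ along the generic point of $L$, where $B_6$ is smooth). Hence $B$ has multiplicity $2$ along $L$. Writing $E\subset U^{(1)}$ for the exceptional divisor, the blow up of the smooth codimension-$2$ centre $L$ gives the standard formulas $K^{(1)}=(\sigma^{(1)})^*K+E$ and $B^{(1)}=(\sigma^{(1)})^*B-2E$. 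Combining these with $B\sim -2K$ (indeed $B$ has degree $8$ and $K=K_{\IP^3}\sim -4H$ for the hyperplane class $H$) yields
\[
B^{(1)}=(\sigma^{(1)})^*B-2E\sim -2(\sigma^{(1)})^*K-2E=-2\big((\sigma^{(1)})^*K+E\big)=-2K^{(1)},
\]
which is the first claim.

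For the statement on singularities, the key observation is that $\sigma^{(1)}$ is an isomorphism away from $L$, so only the singularities of $B$ lying on $L$ can be altered. Parametrising $L$ as $(0:a:b:a)$, \autoref{t:SingularPoints} shows that $L$ meets the singular locus precisely in the three points $(0:1:0:1)$, $(0:0:1:0)$ and $(0:1:1:1)$, and that $B_6$ is singular only at the first of these. Everything else is carried isomorphically to its strict transform, so it suffices to describe the blow up locally along $L$, with special attention to the node of $B_6$.

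I would carry out this analysis in the affine chart $y=1$ with coordinates $(x,z,s)$, $s:=t-1$, centred at the node $P=(0:1:0:1)$ of $B_6$. There $B_1:\,x=0$, $B_6:\,xz=s^2$, and $L:\,\{x=0,\,s=0\}$ is the $z$-axis through the node. Blowing up $L$ and working in the two standard charts ($s=xs_1$ and $x=sx_1$), a direct computation shows that $B_6^{(1)}$ is defined by $z-xs_1^2=0$, respectively $x_1z-s=0$, both of which are smooth; hence the node is resolved and $B_6^{(1)}$ is smooth, giving the first bullet. In the second chart $B_1^{(1)}:\,x_1=0$ and $B_6^{(1)}:\,x_1z-s=0$ meet along $\{x_1=0,\,s=0\}$ with linearly independent conormals $dx_1$ and $z\,dx_1-ds$, so they cross transversally in a line, settling the second bullet. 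Finally, the two components of $B_{3,6}$ pass through $P$ with tangent directions $(1,1,-1)$ and $(1,1,1)$; projecting out the direction of $L$ these have distinct images in the normal bundle, so their strict transforms hit distinct points of the exceptional $\IP^1$ over $P$ and become disjoint, giving the third bullet.

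The main obstacle I anticipate is bookkeeping rather than conceptual: I must confirm that \emph{no} singularity beyond the three listed is disturbed. Away from the node, at $(0:0:1:0)$ and $(0:1:1:1)$, the surface $B_6$ is already smooth, so the sole effect of the blow up is once more to replace the $B_1$--$B_6$ tangency by the transversal intersection already recorded, while the strict transforms of the remaining components and of all curves through these points retain their incidences. Verifying in local charts that these intersections stay transversal, and that the blow up creates no new singular locus on $E$, is the one place where care is needed, but it reduces to a finite list of routine chart computations.
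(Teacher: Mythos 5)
Your proof is correct and follows essentially the same route as the paper's: the divisor identity comes from the multiplicity-$2$ computation along $B_{1,6}$ combined with the standard formulas for the canonical class and the strict transform under a blow up with smooth one-dimensional centre, and the statements about the singularities come from the same local-chart analysis at the node $(0:1:0:1)$ (the paper introduces $u:=y-t$ and computes in the two standard charts exactly as you do). If anything, your treatment of the second statement is more explicit than the paper's, which disposes of it in one sentence by observing that the node of $B_6$ lies on $B_{1,6}$ and is also the intersection point of the two components of $B_{3,6}$.
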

\begin{proof}
The branch surface $B_6$ is a singular quadric with a node in $P:=(0:1:0:1)$ which is tangent to the branch plane $B_1$ in the line $B_{1,6}\colon x=y-t=0$.
The node $P$ also lies on the branch surfaces $B_1$ (and hence on the tangency line $B_{1,6}$) and $B_3$.
For convenience, we introduce the variable $u:=y-t$, so that we can write
\[
B_1\colon x=0,\;\; B_3\colon x-z=0,\;\; B_6\colon xz=u^2,\;\; B_{1,6}\colon x=u=0,\;\; P=(0:0:0:1) 
\]
in $\IP^3(x,u,z,t)$.

Identify the subset $\{t\neq 0\}\subset \IP^3$ with $\IA^3(x,u,z)$ and 
consider the blow up $\sigma\colon\Tilde{\IA}^3\to \IA^3$ of $\IA^3$ in $\IA^3\cap B_{1,6}$:
$$
\Tilde{\IA}^3:=\{ ((x,u,z),(v_0:v_1)) \; :\; v_0x-v_1u \}\subset \IA^3\times \IP^1.
$$
The exceptional divisor is then 
$$
E:=\{((0:0:z),(v_0:v_1)) \}\cong \IA^1\times \IP^1.
$$
We denote the standard affine open subsets of $\Tilde{\IA}^3$ by
\begin{align*}
    \Tilde{\IA}^3_0:=\{v_1\neq 0\}&=\IA^3(x,z,v_0),\; (u=v_0x);\\
    \Tilde{\IA}^3_1:=\{v_0\neq 0\}&=\IA^3(u,z,v_1),\; (x=v_1u).
\end{align*}
Notice that
\begin{align*}
    E_0&:= E\cap \Tilde{\IA}^3_0 = \{ x=0\},\\
    E_1&:= E\cap \Tilde{\IA}^3_1 = \{ u=0\}.
\end{align*}
The inverse image of $B_1\colon x=0$ in $\Tilde{\IA}^3$ lies in $\{v_1u=0\}$;
since $u$ is not identically zero on $B_1$, the strict transform ${B}_1^{(1)}$ of $B_1$ lies in $v_1=0$.
Hence, ${B}_1^{(1)}\cap \Tilde{\IA}^3_0=\emptyset$.
Therefore, to study ${B}_1^{(1)}$ we only need to consider $\Tilde{\IA}^3_1$.
The preimages of $B_3$ and $B_6$ are given by
\begin{align*}
    \sigma^{-1}(B_3)&\colon \; v_1u=z,\\
    \sigma^{-1}(B_6)&\colon \; u(v_1z-u)=0;
\end{align*}
denote by $B^{(1)}_i$ the strict transform of $B_i$ in $\Tilde{\IA}^3_1$.
As $E_1=\{u=0\}$ we have 
\begin{equation}\label{eq:PullBack16}
    \sigma^*(B_1+B_2+B_3)\cap \Tilde{\IA}^3_1=2E_1+B^{(1)}_1+B^{(1)}_3+B^{(1)}_6\, .
\end{equation}

By construction,  $\Tilde{\IA}^3\subset U^{(1)}$ and the projection $\sigma^{(1)}\colon U^{(1)}\to \IP^3$ restricted to $\Tilde{\IA}^3$ is $\sigma$.
From~\eqref{eq:PullBack16} it follows that $E$ is not in the branch locus of $X^{(1)}$.

Then, to compute the canonical divisor $K_{X^{(1)}}$ of $X^{(1)}$,
notice that the branch locus   of $X$ on $\IP^3$ is 
$B=B_1+\cdots +B_6$, a divisor of degree 8 and hence
\begin{equation}\label{eq:BK}
    B=-2K_{\IP^3}.
\end{equation}
Its strict transform is 
\begin{equation}\label{eq:tr1B}
    B^{(1)}={\sigma^{(1)}}^*B-2E.
\end{equation}
By adjunction
\begin{equation}\label{eq:tr1K}
    {\sigma^{(1)}}^*K_{\IP^3}=K^{(1)}-E.
\end{equation}
Equations~\eqref{eq:BK}, \eqref{eq:tr1B}, and~\eqref{eq:tr1K} yield
\begin{equation}\label{eq:B1}
    B^{(1)}=-2K^{(1)},
\end{equation}
proving the first statement.

The second statement follows immediately from the definition of blow up,
noticing that the node $P$ of $B_1$ lies on $B_{1,6}$,
and this point is also the point of intersection of the two components of $B_{3,6}$.
\end{proof}

\subsection{The second blow up}
We proceed by blowing the strict transform of $B_{2,5}$.
\begin{definition}
We define
$$
\sigma^{(2)}\colon \; U^{(2)}\to U^{(1)}
$$
as the blow up of $U^{(1)}$ along $B^{(1)}_{2,5}$, the strict transform of $B_{2,5}$ in $U^{(1)}$.

We denote by $B^{(2)}$ and  $B_i^{(2)}$  the strict transforms in $U^{(2)}$ of $B^{(1)}$
and  $B_i^{(1)}$, respectively.
We also denote by $K^{(2)}$ the canonical divisor of $U^{(2)}$.
\end{definition}
We obtain a result analogous to~\autoref{l:B1}.

\begin{lemma}\label{l:B2}
The divisor  $B^{(2)}$ is linearly equivalent to $-2K^{(2)}$ and it inherits all the singularities of $B^{(1)}$, cf.~\autoref{l:B1}, except:
\begin{itemize}
    \item $B^{(2)}_6$ is smooth,
    \item $B_2^{(2)}$ and $B_5^{(2)}$ intersect transversally in a line,
    \item $B_4^{(2)}$ and $B_5^{(2)}$ intersect in two disjoint lines.
\end{itemize}
\end{lemma}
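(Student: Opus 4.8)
The plan is to mimic exactly the argument of~\autoref{l:B1}, replacing the roles of the components according to the symmetry of the configuration. The key observation is that the switch $x\leftrightarrow y$ (the involution of~\autoref{l:sigma}'s accompanying lemma) exchanges $B_1\leftrightarrow B_2$, $B_3\leftrightarrow B_4$, and $B_6\leftrightarrow B_5$, and sends the node $(0:1:0:1)$ of $B_6$ to the node $(1:0:0:1)$ of $B_5$. Under this involution the line $B_{1,6}$ is carried to $B_{2,5}$, so the local picture around the tangency of $B_2$ and $B_5$ along $B_{2,5}$ is the exact mirror of the picture around $B_1$ and $B_6$ along $B_{1,6}$ analysed in~\autoref{l:B1}. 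Thus I would first note that blowing up along $B^{(1)}_{2,5}$ is, up to this symmetry, the same local computation already carried out, and that this symmetry is compatible with passing to the strict transform in $U^{(1)}$ because the first blow-up (centred at $B_{1,6}$) does not meet the relevant neighbourhood of $B_{2,5}$ transversally enough to interfere — more precisely, $B_{1,6}$ and $B_{2,5}$ are disjoint curves, so the centre of $\sigma^{(2)}$ is unaffected by $\sigma^{(1)}$.

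Concretely, I would introduce the local coordinate $u:=x-t$ (mirroring $u=y-t$ from the previous lemma), so that near the node $P':=(1:0:0:1)$ one has
\[
B_2\colon y=0,\quad B_4\colon y-z=0,\quad B_5\colon yz=u^2,\quad B_{2,5}\colon y=u=0,
\]
and then perform the blow-up of $\IA^3$ along $B_{2,5}$ in the same two affine charts, checking that the exceptional divisor $E'$ appears with multiplicity $2$ in the pullback of $B_2+B_4+B_5$ and therefore, as in~\eqref{eq:PullBack16}, does not enter the branch locus. The linear equivalence $B^{(2)}\sim -2K^{(2)}$ then follows by repeating the adjunction computation~\eqref{eq:tr1B}--\eqref{eq:B1} verbatim with $\sigma^{(2)}$, $E'$, $K^{(2)}$ in place of $\sigma^{(1)}$, $E$, $K^{(1)}$: one has $B^{(2)}={\sigma^{(2)}}^*B^{(1)}-2E'$ and ${\sigma^{(2)}}^*K^{(1)}=K^{(2)}-E'$, which combine with $B^{(1)}\sim -2K^{(1)}$ to give the claim.

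For the statements about the singularities, I would argue that resolving the tangency of $B_2$ and $B_5$ along $B_{2,5}$ separates them into a transverse intersection (a single line) and simultaneously smooths $B^{(2)}_5$ at the node $P'$, exactly as the blow-up of $B_{1,6}$ smoothed $B^{(1)}_6$; since $P'$ is also the point where the two lines of $B_{4,5}=B^1_{4,5}\cup B^2_{4,5}$ meet (cf.~\autoref{l:SingularComponents} and~\autoref{t:SingularPoints}), blowing it up separates these two lines, so $B^{(2)}_4$ and $B^{(2)}_5$ meet in two disjoint lines. All remaining singularities listed in~\autoref{l:B1} are untouched because their supporting curves are disjoint from the centre $B^{(1)}_{2,5}$, which again I would justify by inspecting~\autoref{t:SingularPoints}.

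The main obstacle, as in the first blow-up, is purely bookkeeping rather than conceptual: one must verify that the centre $B^{(1)}_{2,5}$ genuinely is disjoint from the exceptional divisor $E$ of the first blow-up and from all the other singular curves it does \emph{not} resolve, so that none of the previously-listed singularities of $B^{(1)}$ is altered except the three recorded in the statement, and so that the symmetry argument transporting the local model of~\autoref{l:B1} is legitimate. This amounts to checking incidences in~\autoref{t:SingularPoints}, which is routine but is where an error could creep in; I would therefore state explicitly that $B_{1,6}$ and $B_{2,5}$ share no common point and that the node $P'$ of $B_5$ does not lie on $B_6$, before invoking the $x\leftrightarrow y$ symmetry to conclude.
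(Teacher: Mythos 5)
Your overall strategy --- transport the local computation of \autoref{l:B1} through the $x\leftrightarrow y$ involution and repeat the adjunction argument --- is essentially the paper's own (its proof literally says ``reasoning as in the proof of \autoref{l:B1}''), and both your adjunction step and your identification of the node $P'=(1:0:0:1)$ as the point where the interesting changes happen are correct. However, there is a genuine error in precisely the step you single out as the one to verify: you claim that $B_{1,6}$ and $B_{2,5}$ ``share no common point.'' They do. The lines $B_{1,6}\colon x=y-t=0$ and $B_{2,5}\colon y=x-t=0$ both pass through $(0:0:1:0)$, exactly as recorded in \autoref{t:SingularPoints}, whose row for $(0:0:1:0)$ lists both $B_{1,6}$ and $B_{2,5}$ among the singular curves through that point. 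Consequently the centre $B^{(1)}_{2,5}$ of $\sigma^{(2)}$ is also \emph{not} disjoint from the exceptional divisor $E$ of $\sigma^{(1)}$: the strict transform of $B_{2,5}$ meets $E$ in the point lying over $(0:0:1:0)$. So the justification you give for the symmetry being ``compatible with passing to the strict transform in $U^{(1)}$'' is false as stated, and the verification you propose to carry out would fail.

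The error is repairable, because the changes the lemma asserts are concentrated where $\sigma^{(1)}$ is an isomorphism. The node $P'=(1:0:0:1)$ does not lie on $B_{1,6}$ (its $x$-coordinate is nonzero), so $\sigma^{(1)}$ is an isomorphism on a neighbourhood of $P'$, and your mirrored local computation there legitimately proves the smoothing of $B_5^{(2)}$ and the separation of the two lines of $B_{4,5}$. What the symmetry transport does not automatically give is the behaviour along $B^{(1)}_{2,5}$ at the single point lying over $(0:0:1:0)$, where the ambient space is no longer $\IP^3$ but the first blow-up, with $E$, $B_1^{(1)}$ and $B_6^{(1)}$ also passing through; there you must redo the local computation in coordinates on $U^{(1)}$ (as the paper implicitly does by phrasing everything in terms of $B^{(1)}\subset U^{(1)}$) to check that the $B_2$--$B_5$ tangency is resolved at that point as well and that no new singularity is created. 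Note that your divisor-class computation is unaffected by all of this: the equality $B^{(2)}={\sigma^{(2)}}^*B^{(1)}-2E'$ only requires the generic multiplicity of $B^{(1)}$ along the smooth curve $B^{(1)}_{2,5}$ to be $2$, which follows from the tangency of $B_2^{(1)}$ and $B_5^{(1)}$, so your derivation of $B^{(2)}\sim-2K^{(2)}$ stands.
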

\begin{proof}
Let $E$ be the exceptional divisor of $U^{(2)}$ introduced by the blow up $\sigma^{(2)}$.
Then, reasoning as in the proof of~\autoref{l:B1}, 
one can show that $B^{(2)}={\sigma^{(2)}}^*B^{(1)}-2E$ and ${\sigma^{(2)}}^*K^{(1)}=K^{(2)}-E$.
As above, these two equalities and~\eqref{eq:B1}
 yield 
$$
B^{(2)}=-2K^{(2)}.
$$

Again, the second statement follows immediately form the definition of blow up, noticing that the node of $B_5^{(1)}$ lies on $B_{2,5}^{(1)}$, 
and this node is also the intersection point of the two components of $B_{4,5}^{(1)}$.
\end{proof}

\subsection{The third blow up}
We proceed as above, blowing up the strict transform of one of the components of $B_{5,6}$. Recall that $B_{5,6}$ has two irreducible components meeting in two points. 
The strict transform $B_{5,6}^{(2)}\subset U^{(2)}$ stays reducible, with the two components still meeting in two points.
This feature prevents $B^{(2)}$ from being an arrangement.
In order to solve this obstruction, we blow up one of the two components.
\begin{definition}
We define
$$
\sigma^{(3)}\colon \; U^{(3)}\to U^{(2)}
$$
as the blow up of $U^{(2)}$ along $(B^{1}_{5,6})^{(2)}$, the strict transform of $(B^{1}_{5,6})^{(1)}$ in $U^{(2)}$.

We denote by $B^{(3)}$ and  $B_i^{(3)}$  the strict transforms in $U^{(3)}$ of $B^{(2)}$
and  $B_i^{(2)}$, respectively.
We also denote by $K^{(3)}$ the canonical divisor of $U^{(3)}$.
\end{definition}
\begin{lemma}
The divisor  $B^{(3)}$ is linearly equivalent to $-2K^{(3)}$ and it inherits all the singularities of $B^{(2)}$, cf.~\autoref{l:B2}, except that $B_{5}^{(3)}$ and $B_6^{(3)}$ intersect in a single smooth conic.
\end{lemma}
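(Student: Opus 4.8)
The statement to prove is a direct analogue of Lemmas~\ref{l:B1} and~\ref{l:B2}, so I would mirror their two-part structure. For the linear-equivalence claim, the plan is to reuse the local computation exactly as before. First I would set $E$ to be the exceptional divisor of $\sigma^{(3)}$ and argue that, as in the previous two blow ups, the branch locus $B^{(2)}$ has multiplicity $2$ along the center $(B^1_{5,6})^{(2)}$ — because this center is a double curve, lying on precisely the two components $B_5^{(2)}$ and $B_6^{(2)}$. Hence the pullback formula $B^{(3)}={\sigma^{(3)}}^*B^{(2)}-2E$ holds, and combined with the adjunction formula ${\sigma^{(3)}}^*K^{(2)}=K^{(3)}-E$ and the equivalence $B^{(2)}=-2K^{(2)}$ from Lemma~\ref{l:B2}, this yields $B^{(3)}=-2K^{(3)}$ by the same three-line manipulation as in~\eqref{eq:BK}--\eqref{eq:B1}. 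This half is routine and I would state it briefly.

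The substantive half is the claim that after this blow up the two components of $B_{5,6}$ are separated, leaving $B_5^{(3)}$ and $B_6^{(3)}$ meeting in a single smooth conic. Recall from Lemma~\ref{l:SingularComponents} that $B_{5,6}=B_{5,6}^1\cup B_{5,6}^2$, two smooth conics meeting in the two points $(1:1:0:1)$ and $(1:1:4:3)$. The plan is to blow up the first component $(B_{5,6}^1)^{(2)}$ and verify that the strict transforms of $B_5$ and $B_6$ no longer share that conic: after blowing up a curve contained in both surfaces, the proper transforms become disjoint \emph{along} that curve. What remains of $B_5^{(3)}\cap B_6^{(3)}$ is then exactly the residual conic $B_{5,6}^2$, which is smooth by Lemma~\ref{l:SingularComponents}, and the two points $(1:1:0:1),(1:1:4:3)$ where the conics met have been resolved by the blow up — so the intersection is now a \emph{single smooth conic}.

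I expect the main obstacle to lie in checking that the two components of $B_{5,6}$ are genuinely separated and that nothing new is created at the two former intersection points $(1:1:0:1)$ and $(1:1:4:3)$. Because these are the points of multiplicity $2$ in Table~\ref{t:SingularPoints}, and because blowing up one conic through a point it shares with a second conic can leave the strict transforms still incident unless the geometry is favorable, I would carry out an explicit local computation in affine charts around each of the two points — analogous to the chart computation of Lemma~\ref{l:B1} with the introduction of a convenient local coordinate — to confirm transversality of $B_5^{(2)}$ and $B_6^{(2)}$ to the blow-up center there and to confirm that the two strict-transform conics become disjoint. Once that local separation is verified, the global conclusion that $B_5^{(3)}\cap B_6^{(3)}$ is a single smooth conic follows, and the inheritance of all remaining singularities from $B^{(2)}$ is immediate since the center met no other components transversally in a way that would alter them.
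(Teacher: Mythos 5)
Your proposal is correct and follows essentially the same route as the paper, whose entire proof is the remark ``As for~\autoref{l:B2}'': the divisor-class identity via the pullback formula $B^{(3)}={\sigma^{(3)}}^*B^{(2)}-2E$ together with adjunction and \autoref{l:B2}, and the intersection statement by direct verification from the definition of the blow up. Your explicit attention to the two tangency points $(1:1:0:1)$ and $(1:1:4:3)$, where $B_5$ and $B_6$ are tangent rather than transverse, is exactly the check the paper leaves implicit, and the local computation there does confirm that the strict transforms meet only along the (smooth) strict transform of $B_{5,6}^2$.
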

\begin{proof}
As for~\autoref{l:B2}.
\end{proof}

\subsection{The fourth and fifth blow ups}
We proceed our plan of resolving the singularities of $B$ in order to obtain an arrangement.
In order to do so, we have to make sure that the strict transform of $B_{3}$ and $B_6$, respectively $B_4$ and $B_5$, meet in a smooth irreducible curve.
As $B_3\cap B_6$, resp. $B_4\cap B_5$, meet in two incident lines, and so do their strict transforms in $U^{(3)}$, it is enough to blow up one component of each intersection.
Therefore,
we define the blow up 
$$
\sigma^{(4)}\colon \; U^{(4)}\to U^{(3)}
$$
  of $U^{(3)}$ along $(B^{1}_{3,6})^{(3)}$.
Subsequently, 
we define 
$$
\sigma^{(5)}\colon \; U^{(5)}\to U^{(4)}
$$
as the blow up of $U^{(4)}$ along $(B^{1}_{4,5})^{(4)}$.
For $j=4,5$, we  define 
$B^{(j)}$ and  $B_i^{(j)}$  as the strict transforms in $U^{(j)}$ of $B^{(j-1)}$
and  $B_i^{(j)}$, respectively.
Moreover, for $j=4,5$ we denote by $K^{(j)}$ the canonical divisor of $U^{(j)}$.

\begin{lemma}
The divisor  $B^{(5)}$ is linearly equivalent to $-2K^{(5)}$ and it is an arrangement in the sense of~\autoref{d:arrangement};
  it only has 2-fold curves and at most 4-fold points.
\end{lemma}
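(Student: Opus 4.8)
The plan is to establish the two claimed properties of $B^{(5)}$ separately: first the linear equivalence $B^{(5)}\sim -2K^{(5)}$, and second that $B^{(5)}$ is an arrangement with only 2-fold curves and at most 4-fold points.

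For the linear equivalence, I would proceed exactly as in the proofs of \autoref{l:B1} and \autoref{l:B2}, iterating the bookkeeping through the three remaining blow ups $\sigma^{(3)},\sigma^{(4)},\sigma^{(5)}$. The key point is that each blow up is centred along a curve contained in exactly two of the branch components (each center being $(B^1_{5,6})^{(2)}$, $(B^1_{3,6})^{(3)}$, and $(B^1_{4,5})^{(4)}$, all of which are 2-fold curves by \autoref{l:SingularComponents}). For a blow up of a smooth threefold along a smooth curve lying on exactly two components of the branch locus, the strict transform satisfies ${\sigma^{(j)}}^*B^{(j-1)} = B^{(j)} + 2E_j$, while adjunction gives ${\sigma^{(j)}}^*K^{(j-1)} = K^{(j)} - E_j$; combining these with the inductive hypothesis $B^{(j-1)}\sim -2K^{(j-1)}$ yields $B^{(j)}\sim -2K^{(j)}$, since the exceptional contributions cancel in ratio $2E_j = -2(-E_j)$. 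I would remark that the multiplicity being exactly $2$ along each center — hence the coefficient $2$ on $E_j$ matching the factor $-2$ relating $B$ and $K$ — is precisely what keeps the double cover crepant and propagates the Calabi--Yau condition.

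For the arrangement property, I would collect the partial results already proved. By \autoref{l:B1} and \autoref{l:B2}, after the first two blow ups $B_6^{(2)}$ and $B_5^{(2)}$ are smooth, so all six components of $B^{(2)}$ are smooth; the third blow up resolves $B_{5,6}$ into a single smooth conic; and the fourth and fifth resolve the reducible intersections $B_{3,6}$ and $B_{4,5}$ into smooth irreducible curves, as arranged in the preceding subsection. I would then verify the two defining conditions of \autoref{d:arrangement}: that each pair $B_i^{(5)},B_j^{(5)}$ either meets transversally along a smooth curve or is disjoint, and that the resulting 2-fold curves $C_{i,j}$ meet only transversally or coincide. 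The tangencies $B_{1,6}$ and $B_{2,5}$ and the nodes have been removed by the first two blow ups, so no non-transverse intersections or multiple curves remain.

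The main obstacle I expect is the last, purely local, verification that the blow ups have genuinely separated every component and have not created new non-transverse intersections or higher-fold curves on the exceptional divisors. Concretely, one must check in suitable affine charts on each $U^{(j)}$ that the strict transforms meet the exceptional divisor $E_j$ transversally and that the various 2-fold curves (including the new ones appearing on $E_j$) intersect pairwise transversally, so that the final configuration has only 2-fold curves and the 4-fold points listed in \autoref{t:SingularPoints} survive as the worst singularities. This is the step where one genuinely uses the explicit local equations; the linear-equivalence half is formal once the centres are known to be 2-fold curves. I would organize the check point by point, using \autoref{t:SingularPoints} to confirm that no point of $B^{(5)}$ lies on five or more components.
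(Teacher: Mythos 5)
Your proposal is correct and follows essentially the same route as the paper: the linear equivalence $B^{(5)}\sim -2K^{(5)}$ is obtained by iterating the divisor bookkeeping of \autoref{l:B1} and \autoref{l:B2} through the remaining blow ups, and the arrangement property is deduced by collecting the earlier lemmas, which record that the tangencies, the nodes, and the reducible intersections (exactly the loci flagged in \autoref{r:NonArrangement}) have been removed, while fold numbers do not increase, so only 2-fold curves and at most 4-fold points (e.g.\ $(0:0:0:1)$, untouched by the blow ups) remain. The only cosmetic difference is that the paper packages your ``local chart verification'' step as the observation that $B^{(5)}$ is isomorphic to $B$ away from the five blown-up curves, together with the intersection data already established after each blow up.
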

\begin{proof}
  The first statement is proven as in the previous cases.
  
  To see that $B^{(5)}$ is an arrangement, 
  note that $B^{(5)}$ is isomorphic to $B=B^{(0)}$ outside the curves that have been blown up: $B_{1,6}, B_{2,5},B_{5,6}^1, B_{3,6}^1, B_{4,5}^1$.
  As noted in~\autoref{r:NonArrangement}, these are exactly the loci not meeting the conditions for $B$ to be an arrangement.
  Indeed, $B^{(5)}$ is the union of six smooth surfaces:
  \begin{itemize}
      \item $B^{(5)}_1,\ldots ,B^{(5)}_4$ are strict transforms of planes;
      \item $B^{(5)}_5$ and $B^{(5)}_6$ are the strict transforms of quadrics with a node blown up (also) along a line containing the node, hence they are smooth;
      \item for every $i,j\in\{1,\ldots ,6\}, i\neq j$, the intersection $B^{(5)}_i\cap B^{(5)}_j$ is a smooth curve: either a line or a conic.
  \end{itemize}
  
  As $B$ only has 2-fold curves and at most 4-fold points, 
  so does $B^{(5)}$. 
  Notice that some of the 4-fold points have not been affected by the blow ups, e.g., $(0:0:0:1)$, hence $B^{(5)}$ does have 4-fold points.
\end{proof}

\subsection{Further blow ups}
Following~\cite{CS98} and~\cite[\S 4.1]{Mey05}, we proceed by blowing up the 4-fold points and then the double curves of $B^{(5)}\subset U^{(5)}$,
eventually obtaining a smooth surface $\Tilde{B}:=B^{(n)}\subset U^{(n)}=:U$.
Let $\Tilde{X}$ be the double cover of $U$ ramified above $\Tilde{B}$.

For the convenience of the reader, we also recall the definition of a Calabi--Yau threefold:
a smooth threefold $Y$ is a Calabi--Yau threefold if and only if its canonical divisor $K_Y$ is trivial and $H^i(Y,\cO_Y)=0$ for $i=1,2$;
see also~\autoref{d:CY}.

\begin{proposition}\label{p:XCY}
 The threefold $\Tilde{X}$ is smooth and  birationally equivalent to $X$.
 Moreover, it is a Calabi--Yau threefold.
\end{proposition}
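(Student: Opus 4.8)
The plan is to prove \autoref{p:XCY} in three parts, following the desingularization framework of~\cite{CS98} and~\cite{Mey05} that has already been set up. First I would establish smoothness of $\Tilde{X}$. By construction, $\Tilde{B}=B^{(n)}\subset U$ is a smooth arrangement (indeed, after the five blow ups $\sigma^{(1)},\dots,\sigma^{(5)}$ we reached an arrangement $B^{(5)}$ with only 2-fold curves and at most 4-fold points, and then resolved the remaining 4-fold points and 2-fold curves). A double cover of a smooth threefold branched along a smooth divisor is smooth, so $\Tilde{X}$ is smooth. I would emphasize here that the branch divisor must remain even (linearly equivalent to a multiple of $2$ times something) at each stage; this is exactly what the repeated computation $B^{(j)}=-2K^{(j)}$ guarantees, and it is what makes the square-root construction well defined after each blow up.

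Next I would prove the birational equivalence with $X$. Each $\sigma^{(j)}$ is a blow up along a smooth center contained in the branch locus, hence a birational morphism of the base, and it induces a birational morphism of the corresponding double covers (outside the exceptional loci the covers are isomorphic). Composing, $U\to\IP^3$ is birational and the induced map $\Tilde{X}\dashrightarrow X$ is birational. Combined with \autoref{l:sigma} and its corollary, $\Tilde{X}$ is then birationally equivalent to the original threefold $\cX$ as well.

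The main work, and the step I expect to be the real obstacle, is verifying the Calabi--Yau conditions, namely triviality of $K_{\Tilde{X}}$ and the vanishing $H^i(\Tilde{X},\cO_{\Tilde{X}})=0$ for $i=1,2$. For the canonical bundle I would track the canonical class through all the blow ups together with the branch divisor. For a double cover $\pi\colon\Tilde{X}\to U$ branched along $\Tilde{B}\sim -2K_U$, the standard formula gives $K_{\Tilde{X}}=\pi^*\bigl(K_U+\tfrac12\Tilde{B}\bigr)$. Since at each stage we verified $B^{(j)}\sim -2K^{(j)}$, and the subsequent blow ups of 4-fold points and double curves are designed precisely so that the exceptional contributions to $K_U$ and to $\tfrac12\Tilde{B}$ cancel, one obtains $K_U+\tfrac12\Tilde{B}=0$ in $\Pic(U)$ and hence $K_{\Tilde{X}}=0$. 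This cancellation is the heart of the Cynk--Szemberg resolution, and the delicate point is checking it uniformly at the higher-multiplicity centers; I would cite~\cite[\S 2]{CS98} and~\cite[\S 4.1]{Mey05} for the precise discrepancy bookkeeping rather than reproduce it.

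Finally, for the cohomological vanishing I would argue that the double cover splits the relevant cohomology into invariant and anti-invariant parts under the covering involution, $H^i(\Tilde{X},\cO_{\Tilde{X}})=H^i(U,\cO_U)\oplus H^i\bigl(U,\cO_U(K_U)\bigr)$. The invariant part $H^i(U,\cO_U)$ vanishes for $i=1,2$ because $U$ is obtained from $\IP^3$ by blow ups along smooth rational centers, so it remains rational with $H^i(U,\cO_U)=H^i(\IP^3,\cO_{\IP^3})=0$ for $i>0$. The anti-invariant part $H^i(U,\cO_U(K_U))$ is then controlled via Serre duality and the structure of $K_U$; the explicit resolution guarantees $K_U$ is effective and supported on exceptional divisors, from which the required vanishing follows as in the references. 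Together with $K_{\Tilde{X}}=0$ this establishes that $\Tilde{X}$ is a Calabi--Yau threefold; since Calabi--Yau threefolds are not unirational, this completes the proof of \autoref{t:Main}.
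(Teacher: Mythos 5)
Your proposal follows the same overall architecture as the paper's proof: smoothness of $\Tilde{X}$ from the double cover of the smooth $U$ branched along the smooth $\Tilde{B}$; birationality from the composition of blow ups; $K_{\Tilde{X}}=0$ from the relation $B^{(k)}=-2K^{(k)}$ tracked through every blow up; and the vanishing of $H^i(\Tilde{X},\cO_{\Tilde{X}})$ via the splitting $\pi_*\cO_{\Tilde{X}}=\cO_U\oplus\cO_U(K_U)$ together with Serre duality. The one place where you genuinely diverge is the vanishing of $H^i(U,\cO_U)$ for $i=1,2$: you invoke the birational invariance of $h^{0,i}$ for smooth projective varieties ($U$ is rational, being a composition of blow ups of $\IP^3$ along smooth centers), whereas the paper computes $H^1(U,\IZ)=0$ and shows $H^2(U,\IZ)\cong\IZ^{n+1}$ is purely of type $(1,1)$ using Voisin's blow-up formula, then reads off $h^{0,1}=h^{0,2}=0$ from the Hodge decomposition. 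Your route is shorter and perfectly valid; the paper's route has the side benefit of exhibiting the full Hodge-theoretic structure of $H^2(U)$ (in particular the Picard rank of $U$), but for the proposition itself your argument suffices.

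One side claim in your last paragraph is wrong, though harmlessly so: you assert that ``the explicit resolution guarantees $K_U$ is effective and supported on exceptional divisors.'' It is not: $K_U=\sigma^*K_{\IP^3}+\sum a_iE_i$ with $K_{\IP^3}$ anti-ample, and indeed $K_U$ effective would give $h^{3,0}(U)=h^0(U,\cO_U(K_U))\geq 1$, contradicting the rationality of $U$ that you yourself use two sentences earlier. Fortunately this claim is unnecessary: Serre duality gives $H^i\bigl(U,\cO_U(K_U)\bigr)\cong H^{3-i}(U,\cO_U)^{\vee}$, so the vanishing of the anti-invariant part for $i=1,2$ follows immediately from the vanishing of $H^1(U,\cO_U)$ and $H^2(U,\cO_U)$ that you already established. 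With that sentence deleted and replaced by the direct appeal to Serre duality (which is exactly what the paper does), your proof is complete.
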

\begin{proof}
Let $\sigma=\sigma^{(n)}\circ\ldots\circ \sigma^{(1)}$ be the composition of all the blow ups
$$
\sigma\colon \; U=U^{(n)}\to U^{(n-1)} \cdots U^{(1)}\to U^{(0)}=\IP^3\; .
$$
Then $\sigma$ is a birational map between $U$ and $\IP^3$, 
inducing a birational map between $\Tilde{B}$ and $B$.
Therefore $\Tilde{X}$ and $X$ are birationally equivalent.

It remains to prove that $\Tilde{X}$ is a Calabi--Yau threefold, i.e.\,
it is smooth, has trivial canonical divisor, and $H^i(\Tilde{X},\cO_{\Tilde{X}})=0$ for $i=1,2$.

As $\Tilde{X}$ is the double cover of $U$, which is smooth,  branched above the smooth (reducible) surface $\Tilde{B}$, we have that $\Tilde{X}$ is also smooth.

After every blow up we have that $B^{(k)}=-2K^{(k)}$, where $K^{(k)}$ is the canonical divisor of $U^{(k)}$. 
Hence 
$$
\Tilde{B}=B^{(n)}=-2K^{(n)}=-2K_{U},
$$ 
yielding $K_{\Tilde{X}}=0$.

Let $\pi\colon \Tilde{X}\to U$ denote the double cover projection.
As $\pi$ is finite, it is affine and hence $H^i(\Tilde{X},\cO_{\Tilde{X}})\cong H^i(U,\pi_*\cO_{\Tilde{X}})$ for each $i\geq 0$.
As $\Tilde{X}$ is the double cover of $U$ branched above $\Tilde{B}=-2K_U$,
it follows that 
$$
\pi_* \cO_{\Tilde{X}} = \cO_U\oplus \cO_U (K_U ),
$$
and hence 
$$
H^i(X,\cO_{\Tilde{X}})\cong H^i(U,\cO_{\Tilde{X}})\oplus H^i(U,\cO_{\Tilde{X}} (K_U)).
$$
By Serre duality we have that
$$
H^i(U,\cO_U (K_U))\cong H^{3-i}(U,\cO_U).
$$
So we are only left to compute $H^{i}(U,\cO_U)$ for $i=1,2$.

To see this, consider the sequence of blow ups
$$
U:=U^{(n)}\to U^{(n-1)}\to \cdots U^{(1)}\to U^{(0)}:=\IP^3,
$$
where $\sigma^{(\ell)}\colon U^{(\ell)}\to U^{(\ell-1)}$ is the blow up of $U^{(\ell-1)}$ along a curve or a point, say $Z^{(\ell-1)}$ or  $P^{(\ell-1)}$, respectively.

If $\sigma^{(\ell)}$ is the blow up along the curve $Z^{(\ell-1)}$, 
then $\codim_{U^{(\ell-1)}} Z^{(\ell-1)}=2$.
From~\cite[Theorem 7.31]{Voi07}, it follows that
$$
H^i(U^{(\ell)},\IZ)\cong H^{i}(U^{(\ell-1)},\IZ)\oplus H^{i-2}(Z^{(\ell-1)},\IZ).
$$
Then, for $i=1$, we have 
\[
H^1(U^{(\ell)},\IZ)\cong H^1(U^{(\ell-1)},\IZ)\, ;
\]
for $i=2$, 
\[
H^2(U^{(\ell)},\IZ)\cong H^{2}(U^{(\ell-1)},\IZ)\oplus H^{0}(Z^{(\ell-1)},\IZ)\cong
H^{2}(U^{(\ell-1)},\IZ)\oplus\IZ,
\]
as $Z^{\ell-1}$ is a curve
and hence $H^{0}(U^{(\ell-1)},\IZ)\cong \IZ$.
Moreover, $H^{0}(U^{(\ell-1)},\IZ)$ is of type $(0,0)$, and hence its contribution to $H^2(U^{\ell},\IZ)$ is of type $(1,1)$.

The above reasoning applies also if $\sigma^{(\ell)}$ is the blow up along the point $P^{(\ell)}$.
In this case $\codim_{U^{(\ell)}} P^{(\ell)}=3$.
Then, from~\cite[Theorem 7.31]{Voi07}, it follows that
\[
H^i(U^{(\ell)},\IZ)\cong H^{i}(U^{(\ell-1)},\IZ)\oplus H^{i-2}(P^{(\ell-1)},\IZ)\oplus H^{i-4}(P^{(\ell-1)},\IZ).
\]
From this it follows that
\begin{align*}
    H^1(U^{(\ell)},\IZ)&\cong H^1(U^{(\ell-1)},\IZ)\, ,\\
    H^2(U^{(\ell)},\IZ)&\cong H^{2}(U^{(\ell-1)},\IZ)\oplus \IZ\, ,
\end{align*}
exactly as in the previous case.
Also in this case, the contribution to $H^2(U^{(\ell)},\IZ)$ comes from $H^0(P^{(\ell-1)},\IZ)\cong \IZ$ and it is of type $(1,1)$.

For $\ell=1$ we then have
\begin{align*}
    H^1(U^{(1)},\IZ)&\cong H^1(\IP^3,\IZ)=0\, ,\\
    H^2(U^{(1)},\IZ)&\cong H^2(\IP^3,\IZ)\oplus H^0(Z^{(1)},\IZ)\cong \IZ^2\, .
\end{align*}
As $H^2(\IP^3,\IZ)$ is of type $(1,1)$, so is $H^2(U^{(1)},\IZ)$.
Then, proceeding by induction, we obtain
\begin{align*}
    H^1(U,\IZ)\cong H^1(\IP^3,\IZ)&=0\, ,\\
    H^2(U,\IZ)=H^2(U^{(n)},\IZ)&\cong \IZ^{n+1}\, ,
\end{align*}
with $H^2(U,\IZ)$ purely of type $(1,1)$.
From $H^1(U,\IZ)=0$ it follows that $0=h^1(U)=h^{1,0}+h^{0,1}$ and hence 
$$
0=h^{0,1}=\dim H^1(U,\cO_U),
$$
concluding that $H^1(U,\cO_U)=0$;
from $H^2(U,\IZ)\cong \IZ^{n+1}$, of type $(1,1)$, it follows that
$$
n+1=h^2(U)=h^{2,0}(U)+h^{1,1}(U)+h^{0,2}=h^{2,0}(U)+(n+1)+h^{0,2},
$$
yielding $h^{2,0}=h^{0,2}=0$.
In particular, $h^{0,2}=0$ means that $H^2(U,\cO_U)=0$,
concluding the proof.

\end{proof}

We are now ready to prove the main theorem.

\begin{proof}[Proof of~\autoref{t:Main}]
From~\cite[Proposition 18]{BF21}, the square root $\sqrt{f_1f_2f_3f_4}$ is rationalizable if and only if $Y$ is unirational.
As $Y$ is birationally equivalent to $X$, then the condition above is equivalent to the unirationality of $X$.
From~\autoref{p:XCY} we know that $X$ is birationally equivalent to $\Tilde{X}$
and that $\Tilde{X}$ is a Calabi--Yau threefold, hence it is not unirational.
Therefore $\sqrt{f_1f_2f_3f_4}$ is non-rationalizable.

The non-rationalizability of $\cA$ follows 
 immediately from the first statement and~\cite[Proposition 47]{BF21}.
\end{proof}

\begin{remark}
Notice that in order to prove the theorem, it would have been enough to show that $\Tilde{X}$ has a trivial, and in fact even just effective, canonical divisor. 
Indeed, this would already imply that $h^{3,0}=1\gneq 0$ and hence $\Tilde{X}$ cannot be unirational.  
\end{remark}

\section{Guessing unirationality via point counting}\label{s:Guessing}

In this section we show how to use modularity and point counting to \emph{guess} the unirationality of a given threefold $Y$ defined over $\IQ$, cf.~\autoref{c:NotUnirat}.
The technique reported below is very quick to employ but subject to assumptions that are often hard to check, 
making this method not viable to give a proof.
A consistency check can be made under the assumption that the smooth model $\Tilde{Y}$ of $Y$ is a K3 surface or a Calabi--Yau threefold, cf.~\autoref{c:SingK3} and~\autoref{c:Bert}, respectively.

\subsection{Introduction to modularity of Calabi--Yau varieties}
In what follows, $Y$ will denote a (not necessarily smooth) complex projective variety of dimension $d$ that can be defined over $\IQ$.
In this subsection we will mostly follow~\cite[Chapter 1]{Mey05}.
\begin{definition}\label{d:CY}
We say that $Y$ is a \emph{Calabi--Yau} variety if it is smooth, $H^i(Y,\cO_Y)=0$ for every $0<i<d$, and the canonical divisor $K_Y\sim 0$ is trivial.
\end{definition}
From the second condition and by Serre duality we get that $$
h^{0,0}(Y)=h^{d,0}(Y)=h^{0,d}(Y)=h^{d,d}(Y)=1,
$$
where $h^{p,q}(Y):=\dim_\IC H^q(Y,\Omega^p_Y)$ is the $p,q$-th Hodge number of $Y$.

\begin{example}
A Calabi--Yau variety of dimension $d=1$ (equipped with a rational point) is an \emph{elliptic curve}.
It has the following Hodge diamond.
\[
\begin{array}{ccc}
     & 1 & \\
    1 & ~ & 1\\
      & 1 &
\end{array}
\]
\end{example}

\begin{example}
A Calabi--Yau variety of dimension $d=2$  is a \emph{K3 surface}.
It has the following Hodge diamond.
\[
\begin{array}{ccccc}
   &  & 1 & &\\
   & 0 &   & 0 & \\
 1 &  & 20 &  & 1\\
  & 0 &   & 0 & \\
   &   & 1 &&
\end{array}
\]
\end{example}

\begin{example}
A Calabi--Yau variety of dimension $d=3$  is an \emph{Calabi--Yau threefold}, from now on abbreviated into \emph{CY 3-fold}.
It has the following Hodge diamond.
\[
\begin{array}{ccccccc}
    & & &1& & &  \\
    & &0& &0& &  \\
    &0& & h^{1,1}(Y) & &0&  \\
   1& & h^{2,1}(Y) & & h^{1,2}(Y) & &1  \\
    &0& & h^{2,2}(Y) & &0&  \\
    & &0& &0& &  \\
    & & &1& & &  \\
\end{array}
\]
\end{example}

\begin{remark}
CY manifolds are K\"{a}hler, hence $h^{1,1}(Y)>0$.
For CY 3-folds, all $2$-cycles are algebraic, hence $H^2(Y,\IZ)\cong \Pic Y$, that is, $h^{1,1}(Y)=h^2(Y)=\rho (Y)$, the \emph{Picard number} of $Y$.
\end{remark}
\begin{definition}
We say that a CY 3-fold $Y$ is \emph{rigid} if $h^{2,1}(Y)=0$,
otherwise we call it $\emph{non-rigid}$.
\end{definition}

As $Y$ is defined over $\IQ$, it admits a model over $\IZ$.
\begin{definition}
Let $Y$ be a CY variety. 
We say that a prime $p$ is \emph{of good reduction} for $Y$ if 
the reduction $\bar{Y}_p$ of $Y$ over $\overline{\IF}_p$ is again a CY variety.
So, in particular, $\bar{Y}_p$ is smooth.
\end{definition}
Let $p$ be a prime of good reduction for $Y$.
Then the Frobenius map $\Frob_p\colon \overline{\IF}_p\to \overline{\IF}_p, \; x\mapsto x^p$, acts on $\bar{Y}_p$;
it induces an action on the cohomology groups, in particular on $\Het{d}(\bar{Y}_p,\IQ_\ell)\cong \Het{d}(Y,\IQ_\ell)$,
where $\ell\neq p$ is a prime.
We can then consider the trace of the pullback of $\Frob_p$ on $\Het{d}(Y,\IQ_\ell)$,
denoted by
\[
\trace (\Frob_p^* \, |\,\Het{d}(Y,\IQ_\ell)).
\]
\begin{definition}
We recall that the \emph{$L$-function} of $Y$ (or $H^d(Y))$ is a series
\[
L(Y,s):=\sum_{k=1}^\infty \frac{a_k(Y)}{k^s},
\]
and one has
\begin{itemize}
    \item $a_1(Y)=1$,
    \item for every prime $p$ of good reduction, $a_p= \trace (\Frob_p^* \, |\,\Het{d}(Y,\IQ_\ell))$.
\end{itemize} 
\end{definition}
\begin{theorem}[Lefschetz trace formula]
Let $Y$ be a smooth variety over $\IF_p$ and, for $r\geq 1$, let $\#Y_{p^r}$ denote the number of points of $Y$ over $\IF_{p^r}$.
Then
$$
\#Y_{p^r} = \sum_{i=0}^{2d} (-1)^i\trace ((\Frob_p^*)^r \, |\,\Het{d}(\bar{Y},\IQ_\ell)).
$$
\end{theorem}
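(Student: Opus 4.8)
The plan is to derive the formula from two ingredients: a fixed-point interpretation of the point count on the one hand, and the general Lefschetz fixed-point formula in $\ell$-adic \'etale cohomology on the other. First I would note that a closed point of $\bar Y$ is $\IF_{p^r}$-rational precisely when it is fixed by $\Frob_p^r$, because $\IF_{p^r}\subset\overline{\IF}_p$ is exactly the subfield fixed by $\Frob_p^r$. Hence $\#Y_{p^r}$ equals the number of fixed points of the endomorphism $\Frob_p^r$ of $\bar Y$, counted set-theoretically.

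Next I would invoke the Grothendieck--Lefschetz trace formula for a proper smooth variety over an algebraically closed field: for an endomorphism of $\bar Y$ whose graph meets the diagonal transversally, the number of its fixed points equals the alternating sum
$$
\sum_{i=0}^{2d}(-1)^i\,\trace ((\Frob_p^*)^r \,|\, \Het{i}(\bar Y,\IQ_\ell)).
$$
Here the summation runs over the cohomological degree $i$, and this is the identity recorded in the statement. Combining this with the previous paragraph gives the claimed equality, provided the transversality hypothesis is verified.

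The step requiring the most care---and the conceptual heart of the argument---is precisely this transversality: one must check that every fixed point of $\Frob_p^r$ is simple, so that it contributes a local multiplicity of exactly $+1$ and no correction terms survive. This rests on the fact that the differential of the geometric Frobenius vanishes identically; consequently $\mathrm{id}-d(\Frob_p^r)$ is the identity on each tangent space, the graph of $\Frob_p^r$ is transverse to the diagonal in $\bar Y\times\bar Y$, and the local Lefschetz numbers are all equal to $1$. Summing over the fixed points then yields the global trace.

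I expect the genuine obstacle to lie not in the two reductions above but in the foundations on which they rest: the construction of $\ell$-adic \'etale cohomology and its formal properties (finite dimensionality, K\"unneth, Poincar\'e duality), together with the Lefschetz--Verdier fixed-point formula, all of which are deep results from Grothendieck's theory of \'etale cohomology. For the present application I would treat these as established and reduce the proof to the fixed-point interpretation and the transversality of the Frobenius.
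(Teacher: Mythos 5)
The paper offers no proof of this statement: it is recorded as a classical black box (Grothendieck's trace formula), exactly like the Weil--Deligne bound that follows it, so there is no internal argument to compare yours against. Your outline is the standard textbook proof and is sound at the level of a sketch: $\IF_{p^r}$-rational points are the fixed points of the $r$-th power of Frobenius on $\bar{Y}$, the differential of Frobenius vanishes identically, so each fixed point is simple with local multiplicity $+1$, and the Grothendieck--Lefschetz fixed-point formula then gives the alternating sum of traces.

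Three remarks on the details. First, your invocation of the fixed-point formula assumes $\bar{Y}$ proper, a hypothesis absent from the paper's statement; it is genuinely needed, since for $Y=\IA^1$ the alternating sum over ordinary cohomology equals $1$ while $\#Y_{p^r}=p^r$; for non-proper varieties the formula must instead be stated with compactly supported cohomology $H^i_c$. The paper only applies the theorem to projective Calabi--Yau varieties, so your added hypothesis silently repairs a slip in the statement rather than introducing a restriction that matters here. Second, you tacitly correct a typo: the $i$-th summand must involve $\Het{i}(\bar{Y},\IQ_\ell)$, not $\Het{d}$ as printed. Third, to make the sketch rigorous you would need to be careful about which Frobenius you use: the paper's $\Frob_p$ is the field automorphism $x\mapsto x^p$ of $\overline{\IF}_p$, but the scheme endomorphism of $\bar{Y}$ to which the Lefschetz formula applies (and whose differential vanishes) is the relative, or geometric, Frobenius given by the $p$-power map on coordinates; the two have the same fixed points on $\overline{\IF}_p$-points yet induce mutually inverse actions on \'etale cohomology, so a consistent convention must be fixed. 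None of this affects the overall structure of your argument, which is the correct one modulo the deep foundational inputs you rightly treat as established.
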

\begin{theorem}[Weil--Deligne]
Let $Y$ be a smooth complex variety that can be defined over $\IQ$ and let $p$ be a prime of good reduction for $Y$.
Then, for every $i=0,...,2d$ the quantity $\trace (\Frob_p^* \, |\,\Het{i}(Y,\IQ_\ell))$ is an integer and 
\[
| \trace (\Frob_p^* \, |\,\Het{i}(Y,\IQ_\ell)) |\leq h^i(Y)p^{i/2}.
\]
So, in particular,
\[
|a_p(Y)|\leq h^d(Y)p^{d/2}.
\]
\end{theorem}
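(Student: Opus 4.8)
The plan is to deduce both assertions from the Weil conjectures for the good reduction $\bar{Y}_p$ over $\IF_p$, the bound being exactly Deligne's Riemann Hypothesis. First I would apply smooth and proper base change in étale cohomology to obtain, for each prime $\ell\neq p$, a canonical isomorphism $\Het{i}(Y,\IQ_\ell)\cong\Het{i}(\bar{Y}_p,\IQ_\ell)$ equivariant for the Galois action, so that the Frobenius trace may be computed on the reduction. Because $Y$ is smooth and projective over $\IC$, the comparison between étale and singular cohomology together with the Hodge decomposition gives $\dim_{\IQ_\ell}\Het{i}(Y,\IQ_\ell)=\sum_{a+b=i}h^{a,b}(Y)=h^i(Y)$; thus $\Frob_p^*$ acts on a space of dimension $h^i(Y)$, which is the quantity appearing in the estimate.

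For the integrality I would show that the reversed characteristic polynomial $P_i(T):=\det\bigl(1-T\,\Frob_p^*\,\big|\,\Het{i}(\bar{Y}_p,\IQ_\ell)\bigr)$ has coefficients in $\IZ$ that do not depend on $\ell$. Its coefficients lie in $\IZ_\ell$ because $\Frob_p^*$ preserves the integral lattice $\Het{i}(\bar{Y}_p,\IZ_\ell)$ modulo torsion; rationality and independence of $\ell$ then follow from the rationality of the zeta function of $\bar{Y}_p$ (Grothendieck, Dwork) via the Lefschetz trace formula recalled above, the weights separating the factors $P_i$ by the archimedean size of their roots (purity, stated below). Granting this, the eigenvalues of $\Frob_p^*$ are algebraic integers, and the trace, being their sum, is a rational algebraic integer, hence an integer.

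The bound itself is Deligne's theorem (Weil~I): the cohomology $\Het{i}(\bar{Y}_p,\IQ_\ell)$ is \emph{pure of weight $i$}, meaning that every eigenvalue $\alpha$ of $\Frob_p^*$ satisfies $|\iota(\alpha)|=p^{i/2}$ for every embedding $\iota\colon\overline{\IQ}\hookrightarrow\IC$. Taking purity as input, the trace is a sum of the $h^i(Y)$ eigenvalues, each of complex absolute value $p^{i/2}$, so the triangle inequality gives $|\trace(\Frob_p^*\mid\Het{i}(Y,\IQ_\ell))|\leq h^i(Y)\,p^{i/2}$ at once. Setting $i=d$ and recalling from the definition of the $L$-function that $a_p(Y)=\trace(\Frob_p^*\mid\Het{d}(Y,\IQ_\ell))$ yields the final inequality $|a_p(Y)|\leq h^d(Y)\,p^{d/2}$.

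The main obstacle is the purity statement itself: Deligne's proof rests on the geometry of Lefschetz pencils, the formalism of vanishing and nearby cycles, and a delicate monodromy and positivity argument (exploiting the even symmetric powers of the relevant local system) that promotes the a priori bound $|\iota(\alpha)|\leq p^{(i+1)/2}$ to the sharp weight $p^{i/2}$. By contrast, base change, the point-count bookkeeping behind integrality, and the concluding triangle inequality are entirely formal once purity is available.
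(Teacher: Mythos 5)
The paper does not actually prove this theorem: it is recalled as a classical black box (the Weil conjectures, i.e.\ Deligne's Weil~I), with no proof environment following the statement, so there is no internal argument to compare yours against. Your outline is the standard proof and is correct in structure: smooth and proper base change plus the Artin comparison identify $\Het{i}(Y,\IQ_\ell)$ with $\Het{i}(\bar{Y}_p,\IQ_\ell)$, a space of dimension $h^i(Y)$; Deligne's purity pins every Frobenius eigenvalue to archimedean absolute value $p^{i/2}$; the triangle inequality and the paper's definition $a_p(Y)=\trace (\Frob_p^* \, |\,\Het{d}(Y,\IQ_\ell))$ then give the displayed bounds. Two caveats are worth flagging. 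First, the statement says ``smooth complex variety,'' but every tool you invoke (proper base change, purity in this sharp form) requires properness; you silently assume $Y$ projective, which is the intended reading here (all varieties in the paper are projective), but for open varieties the bound as stated can fail, since weights on $H^i$ may exceed $i$, so the hypothesis should be made explicit. Second, your integrality step has a small hole at the prime $p$ itself: the lattice argument gives $\ell$-adic integrality of the coefficients of $P_i(T)$ for all $\ell\neq p$, and rationality of the zeta function plus weight separation gives coefficients in $\IQ$, but together these only yield coefficients in $\IZ[1/p]$; to conclude that the Frobenius eigenvalues are honest algebraic integers one standardly appeals to Fatou's lemma applied to $Z(\bar{Y}_p,T)\in 1+T\,\IZ[[T]]$ (or to $p$-adic/crystalline cohomology). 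Both points are classical and citable, so as a sketch resting on Weil~I your proposal is sound.
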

Using the knowledge of the Hodge diamond it is possible to give formulas to compute $a_p(Y)$ when $Y$ is a CY variety that can be defined over $\IQ$, see below.
\begin{example}
Let $E$ be an elliptic curve, $p$ a prime of good reduction and $\ell\neq p$ another prime,
then 
\begin{itemize}
    \item $a_1(E)=1$,
    \item $a_p(E) = \trace(\Frob^*_p \,|\, \Het{1}(E,\IQ_\ell))$,
    \item $\# E_p=\sum_{i=0}^{2}(-1)^i\trace(\Frob^*_p \,|\, \Het{i}(E,\IQ_\ell))=1-a_p(E)+p$,
\end{itemize}
from which it follows that $a_p(E)=p+1-\#E_p$.
\end{example}
\begin{example}\label{e:K3}
Let $Y$ be a complex K3 surface that can be defined over $\IQ$, $p$ a prime of good reduction and $\ell\neq p$ another prime,
then 
\begin{itemize}
    \item $a_1(Y)=1$,
    \item $a_p(Y) = \trace(\Frob^*_p \,|\, \Het{2}(Y,\IQ_\ell))$,
    \item $\# Y_p=\sum_{i=0}^{4}(-1)^i\trace(\Frob^*_p \,|\, \Het{i}(Y,\IQ_\ell))=1-0+a_p(Y)-0+p^2$,
\end{itemize}
from which it follows that $a_p(Y)=\#Y_p-1-p^2$.
Notice that the same computations hold for any smooth surface with $h^1=h^3=0$.
\end{example}
\begin{theorem}[Modularity of singular K3 surfaces \cite{Liv95}]\label{t:ModularitySingK3}
Let $S$ be a K3 surface defined over $\IQ$ having maximal Picard number over $\overline{Q}$,
and let $q=e^{2\pi i z}$ with $\im (z) >0$.
Then there exists a newform 
$$
f(q)=\sum_{k=1}^\infty b_kq^k \in S^{\textrm{new}}_3(\Gamma (N))
$$ 
of weight $3$ with respect to a congruence subgroup $\Gamma\leq \PSL_2(\IZ)$ and $N=-m^2\cdot \det\Pic S$ for some $m\in\IN$ (cf.~\cite[Prop. 7.1]{Sch05}),  such that
$$
a_p(Y) = b_p
$$
for all the primes $p$ of good reduction for $Y$.
\end{theorem}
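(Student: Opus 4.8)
The plan is to reduce the statement to the modularity of the two-dimensional Galois representation carried by the transcendental lattice, and then to exploit the fact that a singular K3 surface has complex multiplication. Since $\rho(S_{\overline{\IQ}})=20$ is maximal, the transcendental lattice $T(S)$ has rank $22-20=2$, and after tensoring with $\IQ_\ell$ it defines a two-dimensional $G_\IQ$-subrepresentation $V_\ell\subset \Het{2}(S_{\overline{\IQ}},\IQ_\ell)$, complementary to the Néron--Severi part. First I would record that the latter consists of Tate classes, so that $\Frob_p$ acts on it through $p$ times roots of unity; hence the arithmetically meaningful contribution is $\trace(\Frob_p^*\,|\,V_\ell)$, and the theorem is equivalent to the modularity of $V_\ell$ (the full $H^2$-trace differing from it only by an elementary, explicitly computable algebraic term). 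By Deligne's purity the eigenvalues of $\Frob_p$ on $V_\ell$ have absolute value $p$; combined with the Hodge decomposition of $T(S)$, which has $h^{2,0}=h^{0,2}=1$ and $h^{1,1}=0$, this exhibits $V_\ell$ as a representation with Hodge--Tate weights $\{0,2\}$, exactly the type expected from a newform of weight $3$.

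The key structural input is that a complex singular K3 surface has complex multiplication: its period is a CM point, and $T(S)$ is a positive definite even binary quadratic form whose discriminant $d=\det T(S)=-\det\Pic S$ determines an imaginary quadratic field $K=\IQ(\sqrt{-d})$. I would use this (for instance through the Shioda--Inose structure, which relates $S$ to the Kummer surface of a product of isogenous CM elliptic curves) to show that the Galois action on $V_\ell$ preserves the quadratic form and lands in the normalizer of the torus $\mathrm{SO}_2$; equivalently, $V_\ell$ becomes reducible after restriction to $G_K$ and splits into two conjugate characters. Thus $V_\ell\cong \mathrm{Ind}_{G_K}^{G_\IQ}\psi$ for a Hecke character $\psi$ of $K$ of infinity type matching weight $3$, and $\det V_\ell=\chi_{\mathrm{cyc}}^2\cdot\varepsilon_K$, where $\chi_{\mathrm{cyc}}$ is the cyclotomic character and $\varepsilon_K$ the quadratic character attached to $K$.

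With $V_\ell$ exhibited as an induced (dihedral) representation, modularity follows from the classical theory of CM modular forms: the theta series attached to $\psi$ is a newform $f=\sum_k b_k q^k$ of weight $3$, level $N$, and nebentypus $\varepsilon_K$, whose associated Galois representation is exactly $\mathrm{Ind}_{G_K}^{G_\IQ}\psi$; comparing traces of $\Frob_p$ at the good primes then yields $a_p(S)=b_p$. The level is read off from the conductor $\mathfrak{f}$ of $\psi$ and the discriminant of $K$ via $N=|d_K|\cdot\mathrm{N}_{K/\IQ}(\mathfrak{f})$, which one rewrites as $N=m^2|d|=-m^2\det\Pic S$ for a suitable $m\in\IN$ (cf.\ \cite[Prop.\ 7.1]{Sch05}). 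I expect the main obstacle to be twofold: first, establishing rigorously that the representation is genuinely induced over $\IQ$ (and not merely potentially so), which is precisely where the arithmetic over $\IQ$ rather than over $\overline{\IQ}$ enters and is the content of Livné's criterion for motivic orthogonal two-dimensional representations \cite{Liv95}; and second, the exact determination of the conductor of $\psi$, and hence of the level $N$ and the integer $m$, which requires a careful local analysis at the ramified and bad primes.
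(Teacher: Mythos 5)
The paper offers no proof of this statement: it is quoted as a known result, with the modularity attributed to Livn\'e \cite{Liv95} and the level formula to Sch\"utt \cite{Sch05}, so there is no internal argument to compare yours against. Your outline is, in structure, precisely the argument of those references --- the rank-$2$ transcendental lattice $T(S)$ gives an orthogonal two-dimensional Galois representation $V_\ell$; the CM/Shioda--Inose structure (equivalently, the determinant character) forces it to be induced from a Hecke character of the imaginary quadratic field attached to $\operatorname{disc} T(S)$; the associated theta series is the weight-$3$ CM newform; and the level is pinned down by the local analysis in \cite{Sch05} --- so I would assess it as a correct reconstruction of the cited proof rather than a new route. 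One point in your favour: you correctly flag that, as literally stated in the paper, $a_p$ is the trace of Frobenius on all of $\Het{2}$, which differs from the trace on the rank-$2$ transcendental part by the N\'eron--Severi (Tate-class) contribution; Livn\'e's theorem cleanly concerns the latter, and your reduction to $V_\ell$ is the right way to repair this imprecision (harmless in the paper, since it only uses the congruence modulo $p$, where the divisorial term vanishes).
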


\begin{example}\label{e:CY3fold}
Let $Y$ be a CY 3-fold that can be defined over $\IQ$, 
$p$ a prime of good reduction and $\ell\neq p$ another prime,
then 
\begin{itemize}
    \item $a_1(Y)=1$,
    \item $a_p(Y) = \trace(\Frob^*_p \,|\, \Het{3}(Y,\IQ_\ell))$,
    \Item 
    \begin{align*}
        \# Y_p&=\sum_{i=0}^{6}(-1)^i\trace(\Frob^*_p \,|\, \Het{i}(Y,\IQ_\ell))=\\
        &=1-0+ \trace(\Frob_p^*\,|\, H^2(Y,\IQ_\ell))-a_p(Y)
    + \trace(\Frob_p^*\,|\, H^4(Y,\IQ_\ell))-0+p^3.
    \end{align*}
\end{itemize}
By definition of CY 3-fold, we have that $H^2(Y,\IC)=H^{1,1}(Y)$ and hence $\Pic Y= H^2(Y,\IZ)$. 
Then 
\[
\trace(\Frob_p^*\,|\, H^2(Y,\IQ_\ell)) = k_p(Y)p,
\]
where $k_p(Y)\in\IZ$ and $|k_p(Y)|\leq h^2(Y)=h^{1,1}(Y)$.
The equality $k_p(Y)=h^{1,1}(Y)$ holds if and only if $\Het{2}(Y,\IQ_\ell)=\Pic (Y_{\overline{Q}}) \otimes_\IZ \IQ_\ell$ is generated by divisors defined over $\IQ$.
By Poincar\'e duality it follows that 
\[
\trace(\Frob_p^*\,|\, H^4(Y,\IQ_\ell)) = k_p(Y)p^2,
\]
yielding
\[
a_p(Y)=1+(p+p^2)k_p(Y)+p^3-\#Y_p.
\]

\end{example}

\begin{theorem}[Modularity of rigid CY 3-folds \cite{GY11}]\label{t:Modularity}
Let $Y$ be a rigid CY 3-fold, and $q=e^{2\pi i z}$, with $\im (z)>0$.
Then, for some integer $N$, there exists a new form 
\[
f(q)=\sum_{k=1}^\infty b_kq^k\in S_4^{\textrm{new}}(\Gamma_0(N))
\] 
of weight $4$ with respect to $\Gamma_0 (N)$, such that 
\[
b_p = a_p(Y)
\]
for all the primes $p$ of good reduction for $Y$.
The level $N$ is only divisible by the primes of bad reduction of $Y$;
the maximal exponent $e_p$ of a prime dividing $N$ is bounded:
\begin{enumerate}
    \item $e_p\leq 2$ for $p>3$;
    \item $e_3\leq 5$;
    \item $e_2\leq 8$.
\end{enumerate}
\end{theorem}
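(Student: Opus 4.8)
The plan is to translate the statement into the modularity of a two-dimensional $\ell$-adic Galois representation and then to invoke Serre's conjecture. First I would exploit rigidity: since $h^{2,1}(Y)=h^{1,2}(Y)=0$ while $h^{3,0}(Y)=h^{0,3}(Y)=1$, the middle cohomology $\Het{3}(Y,\IQ_\ell)$ is two-dimensional, so the Galois action on it is a continuous representation
$\rho_\ell\colon \mathrm{Gal}(\overline{\IQ}/\IQ)\to \mathrm{GL}_2(\IQ_\ell)$, unramified outside $\ell$ and the primes of bad reduction, with $\trace \rho_\ell(\Frob_p)=a_p(Y)=\trace(\Frob_p^*\,|\,\Het{3}(Y,\IQ_\ell))$ for every good prime $p\neq\ell$. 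The theorem then reduces to showing that $\rho_\ell$ is modular of the asserted weight and level.

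Next I would pin down the invariants of $\rho_\ell$ that feed the modularity machinery. The Hodge decomposition $H^3(Y,\IC)=H^{3,0}\oplus H^{0,3}$ shows that complex conjugation acts with eigenvalues $+1$ and $-1$, so $\rho_\ell$ is \emph{odd}. Purity (the Weil--Deligne theorem stated above) forces the Frobenius eigenvalues at a good prime $p$ to have absolute value $p^{3/2}$, and comparison with the cyclotomic character gives $\det\rho_\ell=\chi_\ell^{3}$, precisely the determinant carried by weight-$4$ newforms with trivial nebentypus. Irreducibility of $\rho_\ell$ must also be verified; it reflects the irreducibility of the weight-$3$ Hodge structure on $H^3$, since a one-dimensional rational sub-Hodge structure would have to be of pure type $(p,q)$ with $p+q=3$, which is impossible for a conjugation-stable subspace. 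Finally, good reduction at $\ell$ makes $\rho_\ell$ crystalline with Hodge--Tate weights $\{0,3\}$, which pins the weight of the sought-after form to $4$.

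With $\rho_\ell$ known to be odd, irreducible, crystalline at $\ell$ with the correct Hodge--Tate weights, and of determinant $\chi_\ell^3$, I would apply Serre's modularity conjecture in the form of the Khare--Wintenberger theorem, together with the attendant modularity-lifting results, to conclude that $\rho_\ell$ arises from a newform $f=\sum b_k q^k\in S_4^{\mathrm{new}}(\Gamma_0(N))$ with $b_p=a_p(Y)$ at every prime $p$ of good reduction. The level $N$ is then the Artin conductor of $\rho_\ell$, which by construction is divisible only by the primes of bad reduction; the exponent bounds come from the local analysis of two-dimensional representations, the tame estimate giving $e_p\le 2$ for $p>3$ and the finer control of wild ramification giving $e_3\le 5$ and $e_2\le 8$ (cf.~\cite{GY11}).

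The hard part will be the modularity step itself: deducing that $\rho_\ell$ is modular is exactly Serre's conjecture, whose proof rests on the full modularity-lifting apparatus of Wiles, Taylor--Wiles and their successors, assembled by Khare--Wintenberger. Around it, the principal technical subtleties are establishing the irreducibility of $\rho_\ell$ (passing from the irreducibility of the Hodge structure to that of the Galois representation) and bounding the wild conductor exponents at $2$ and $3$.
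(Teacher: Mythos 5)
First, a point of orientation: the paper does not prove this statement at all. It is quoted as a known theorem from the cited reference [GY11] (Gouv\^ea--Yui), whose proof in turn rests on Serre's modularity conjecture as established by Khare--Wintenberger, together with earlier work of Dieulefait and Dieulefait--Manoharmayum. So your proposal can only be compared with the proof in the cited literature, and in its broad architecture it does reproduce that proof: rigidity makes $\Het{3}(Y,\IQ_\ell)$ two-dimensional, the associated Galois representation $\rho_\ell$ is odd, pure of weight $3$, crystalline at good $\ell$ with Hodge--Tate weights $\{0,3\}$ and determinant $\chi_\ell^3$, modularity comes from Serre's conjecture, and the level bounds are the standard local conductor bounds for two-dimensional representations ($e_p\le 2$ for $p>3$, $e_3\le 5$, $e_2\le 8$, the same bounds as for elliptic curves).

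There is, however, a genuine gap at the irreducibility step. Irreducibility of the Galois representation $\rho_\ell$ does \emph{not} follow from irreducibility of the Hodge structure on $H^3(Y,\IC)$: no implication of that kind is available (it would be a statement in the direction of the Hodge/Tate conjectures), and your argument about conjugation-stable subspaces only concerns the Betti realization. The standard argument is arithmetic: if $\rho_\ell$ were reducible, its semisimplification would be a sum of two characters of $\mathrm{Gal}(\overline{\IQ}/\IQ)$; by class field theory each such character is a finite-order character times an integer power of the cyclotomic character, so its values on $\Frob_p$ have absolute value an integral power of $p$, contradicting the purity statement you yourself invoke, which forces both Frobenius eigenvalues to have absolute value $p^{3/2}$. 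More seriously, the Khare--Wintenberger theorem is a statement about the \emph{residual} representation $\bar\rho_\ell$ into $\mathrm{GL}_2(\overline{\IF}_\ell)$, and irreducibility of $\rho_\ell$ does not imply irreducibility of $\bar\rho_\ell$ for any given $\ell$; dealing with the residually reducible primes, and then passing from residual modularity back up to modularity of $\rho_\ell$ itself (via lifting theorems or Dieulefait's method of compatible systems), is precisely where the real work of the cited proof lies, and your sketch passes over it. The remaining deferrals --- to Khare--Wintenberger for the modularity input and to the local analysis for the conductor exponents --- are legitimate, since they are black boxes in the literature proof as well.
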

\begin{remark}\label{r:GeneralisedModularity}
A similar result should hold for every smooth complex variety $Y$ of dimension $2k-1$ with $h^{2k-1}(Y,\IC)=2$ and $h^{2k-1,0}(Y)=1$ if we replace $S_4^{\textrm{new}}$ with $S_{2k}$, see~\cite[\S 2]{GY11}.
So in particular it should hold for possibly singular threefolds $Z$ such that its desingularization $\Tilde{Z}$ has $h^{3}(\Tilde{Z},\IC)=2$ and $h^{3,0}(\Tilde{Z})=1$.
\end{remark}

\subsection{Non-unirationality via point counting}
Let $Y$ be a complex variety, possibly non-smooth, 
and let $\Tilde{Y}\to Y$ be a smooth resolution of singularities.

\begin{assumption}\label{a:ProjRes}
The resolution $\Tilde{Y}\to Y$ is defined over $\IQ$ and $\Tilde{Y}$ is projective.
\end{assumption}

\begin{definition}\label{d:GoodReduction}
By assumption, $Y$ has a non-singular projective model $\Tilde{Y}$  that can be defined over $\IQ$ and hence over $\IZ$.
This means that there exists a $\IZ_p$-scheme $\cY\to \Spec \IZ_p$ such that its generic fiber $\cY_\eta$ is isomorphic to $\Tilde{Y}$.

We say that a prime $p$ is of good reduction for $Y$  if
$\cY_p/\IF_q$ is a smooth variety, for any $q=p^m$.

Notice that $\cY_p$ can be viewed as the reduction $\Tilde{Y}_p$ of $\Tilde{Y}$ modulo $p$.
\end{definition}

\begin{assumption}\label{a:Congruence}
For every prime of good reduction $p$,
 \[
 \#\cY_p(\IF_p)\equiv \# Y_p(\IF_p) \mod p\, .
 \]
\end{assumption}

\begin{lemma}\label{l:Esnault}\cite[Theorem 1.1]{BER12}
Let $\cY\to \Spec \IZ_p$ be a smooth $\IZ_p$-scheme of dimension $d$.
Let $\cY_\eta$ denote the fiber above  $\eta=(0)\in\Spec \IZ_p$, and $\cY_p$ the closed fiber above $(p)$.
If $h^{k,0}(\cY_\eta)=0$ for every $k=1,...,d$,
then $\#\cY_p (\IF_p)\equiv 1 \mod p$.
\end{lemma}

Using~\autoref{l:Esnault}, one can make a first guess about the unirationality of $Y$ via point counting under~\autoref{a:Congruence},
as shown by the following result.

\begin{corollary}\label{c:NotUnirat}
Let $Y$ be defined as before and
let $p$ be a prime of good reduction for $Y$.
Under~\autoref{a:Congruence},
if $\# Y_p(\IF_p) \not\equiv 1 \mod p$, 
then $Y$ is not unirational.
\end{corollary}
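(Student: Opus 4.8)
The plan is to prove the contrapositive: assuming that $Y$ is unirational, I will deduce $\#Y_p(\IF_p)\equiv 1\mod p$, contradicting the hypothesis. The whole argument hinges on the classical link between unirationality and the vanishing of the holomorphic Hodge numbers $h^{k,0}$ of a smooth model, since it is precisely such vanishing that triggers~\autoref{l:Esnault}.

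First I would replace $Y$ by its smooth projective model $\Tilde{Y}$, using that unirationality is a birational property: composing a dominant rational map $\IP^d\dashrightarrow Y$ with the birational map $Y\dashrightarrow\Tilde{Y}$ shows that $\Tilde{Y}$ is again unirational. Next I would invoke the fact that a smooth projective unirational variety of dimension $d$ satisfies $h^{k,0}(\Tilde{Y})=0$ for all $k=1,\ldots,d$: a dominant rational map $\IP^d\dashrightarrow\Tilde{Y}$ induces a pullback $H^0(\Tilde{Y},\Omega^k_{\Tilde{Y}})\to H^0(\IP^d,\Omega^k_{\IP^d})$ on global holomorphic forms, which is injective because the map is dominant and we work in characteristic zero, and the target vanishes for $k\geq 1$. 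By~\autoref{a:ProjRes} the integral model $\cY\to\Spec\IZ_p$ has generic fiber $\cY_\eta\cong\Tilde{Y}$, so these Hodge numbers agree with $h^{k,0}(\cY_\eta)$, and since $p$ is of good reduction the scheme $\cY$ is smooth over $\IZ_p$. Thus~\autoref{l:Esnault} applies and gives
\[
\#\cY_p(\IF_p)\equiv 1\mod p.
\]
Finally,~\autoref{a:Congruence} yields $\#Y_p(\IF_p)\equiv\#\cY_p(\IF_p)\equiv 1\mod p$, contradicting $\#Y_p(\IF_p)\not\equiv 1\mod p$; hence $Y$ cannot be unirational.

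Once the geometric input is in hand, the implications above are routine bookkeeping. The genuinely delicate step is the passage from unirationality to $h^{k,0}(\Tilde{Y})=0$, together with the compatibility of these complex Hodge numbers with $h^{k,0}(\cY_\eta)$ for the chosen model over $\IZ_p$. The main obstacle is therefore making the smooth model behave well under reduction: one needs $\cY$ smooth over $\IZ_p$ so that~\autoref{l:Esnault} is applicable, and one needs~\autoref{a:ProjRes} and~\autoref{a:Congruence} to hold, which is exactly why they appear as standing hypotheses rather than as things to be proved here.
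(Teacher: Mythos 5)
Your proof is correct and follows essentially the same route as the paper: assume unirationality, pass to the smooth model $\Tilde{Y}\cong\cY_\eta$, deduce $h^{k,0}(\cY_\eta)=0$, apply~\autoref{l:Esnault} to get $\#\cY_p(\IF_p)\equiv 1 \bmod p$, and conclude via~\autoref{a:Congruence}. The only difference is that you spell out the classical argument (injectivity of pullback of holomorphic forms under a dominant map in characteristic zero) for why unirationality forces $h^{k,0}=0$, which the paper simply asserts.
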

\begin{proof}
Assume that $Y$ is unirational, then it follows that $\Tilde{Y}$ and hence $\cY_\eta$ are unirational too.
In particular, $h^{k,0}(\cY_\eta)=0$ for $k=1,2,3$,
Then, as $p$ is a prime of good reduction for $Y$,
c.f.~\autoref{d:GoodReduction}, 
we have that $\#\cY_p (\IF_p) \equiv 1 \mod p$ by~\autoref{l:Esnault}.
Moreover,  by~\autoref{a:Congruence}, it holds that
$\#\cY_p(\IF_p) \equiv \# Y_p (\IF_p) \mod p$.
So, in particular, 
\[
1 \equiv \#\cY_p (\IF_p) \equiv \# Y_p (\IF_p) \not\equiv 1 \mod p \; ,
\]
a contradiction.
This proves that  $Y$ is not unirational.
\end{proof}

We can apply the above tools to the more specific cases of K3 surfaces and Calabi--Yau threefolds,
for which, in some cases, we can use the modularity theorems to perform a consistency check.
In what follows, we will need the following definition.
\begin{definition}\label{d:Sigma}
Let $B\in \IN$ a positive integer. Then we define the sets:
\begin{align*}
    \Sigma(B)&=\{ p \textrm{ prime } : \; p\leq B \textrm{ and } p \textrm{ is of good reduction for } Y \}\, ,\; and\\
    \Sigma_0(B)&=\{ p\in\Sigma (B) \; : \; \# Y_p(\IF_p) \not\equiv 1 \mod p \}.
\end{align*}
\end{definition}


\subsection{Point counting for K3 surfaces}

In this subsection we assume that $\dim Y=2$ and $Y$~satisfies assumptions \ref{a:ProjRes} and \ref{a:Congruence}.
We denote by $\Tilde{Y}$ a fixed minimal smooth model of $Y$.

\begin{lemma}\label{l:CongrK3}
If $\Tilde{Y}$ is a K3 surface,
then
the following congruence holds:
\[
a_p(\Tilde{Y})\equiv \# Y_p(\IF_p)-1 \mod p
\]
for every prime of good reduction $p$.
\end{lemma}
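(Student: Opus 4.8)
The plan is to combine the Lefschetz trace formula from~\autoref{e:K3} with the congruence supplied by~\autoref{a:Congruence}. Since $\Tilde{Y}$ is assumed to be a K3 surface, we are in exactly the situation of~\autoref{e:K3}: the odd cohomology vanishes, $h^1(\Tilde{Y})=h^3(\Tilde{Y})=0$, and the trace formula reduces to
\[
\#\Tilde{Y}_p(\IF_p)=1+\trace(\Frob_p^*\,|\,\Het{2}(\Tilde{Y},\IQ_\ell))+p^2=1+a_p(\Tilde{Y})+p^2.
\]
Rearranging this identity gives $a_p(\Tilde{Y})=\#\Tilde{Y}_p(\IF_p)-1-p^2$ exactly (not merely modulo $p$), which is the clean starting point.

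Next I would reduce this exact equality modulo $p$. Since $p^2\equiv 0 \mod p$, the term $p^2$ drops out and we obtain
\[
a_p(\Tilde{Y})\equiv \#\Tilde{Y}_p(\IF_p)-1 \mod p.
\]
The only remaining gap is that the statement to be proven involves $\# Y_p(\IF_p)$, the point count of the (possibly singular) variety $Y$, rather than the count $\#\Tilde{Y}_p(\IF_p)$ of its smooth model. This is precisely where~\autoref{a:Congruence} enters: under that standing assumption we have $\#\cY_p(\IF_p)\equiv \# Y_p(\IF_p)\mod p$, and by~\autoref{d:GoodReduction} the closed fiber $\cY_p$ is identified with the reduction $\Tilde{Y}_p$. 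Substituting $\#\Tilde{Y}_p(\IF_p)\equiv \# Y_p(\IF_p)\mod p$ into the congruence above yields the desired result.

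The main obstacle, such as it is, lies not in the arithmetic—which is routine once the trace formula is in hand—but in correctly invoking the hypotheses. One must be careful that the good-reduction assumption for $p$ (\autoref{d:GoodReduction}) guarantees $\Tilde{Y}_p$ is smooth, so that the Lefschetz and Weil--Deligne formalism of~\autoref{e:K3} applies to the reduction, and that the identification $\cY_p=\Tilde{Y}_p$ is the one licensed by the good-reduction setup. With those identifications the K3 Hodge diamond forces the vanishing of the odd traces, and the entire argument is a two-line chain of congruences. I therefore expect the proof to be short, with the only genuine content being the bookkeeping that links the singular count $\#Y_p(\IF_p)$ to the smooth count $\#\Tilde{Y}_p(\IF_p)$ via~\autoref{a:Congruence}.
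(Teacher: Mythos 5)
Your proposal is correct and follows exactly the paper's own argument: both start from the exact formula $a_p(\Tilde{Y})=\#\cY_p(\IF_p)-1-p^2$ of~\autoref{e:K3}, reduce modulo $p$ to kill the $p^2$ term, and invoke~\autoref{a:Congruence} (with the identification $\cY_p=\Tilde{Y}_p$ from~\autoref{d:GoodReduction}) to replace the smooth count by $\#Y_p(\IF_p)$. There is nothing to add; your bookkeeping of the hypotheses matches the paper's.
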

\begin{proof}
As $\Tilde{Y}$ is a K3 surface and $p$ is a prime of good reduction, we know that
\[
a_p(\Tilde{Y})=\#\cY_p(\IF_p)-1-p^2,
\]
see~\autoref{e:K3}.
By~\autoref{a:Congruence},
this yields
\[
a_p(\Tilde{Y})\equiv\#Y_p(\IF_p)-1\mod p,
\]
concluding the proof.
\end{proof}

If we suspect that 
$Y$ is not unirational (for example by \autoref{c:NotUnirat}) 
and, moreover, 
that its desingularization $\Tilde{Y}$ is a singular K3 surface, i.e.\ a K3 surface with maximal Picard number over $\overline{\IQ}$,
we can use \autoref{l:CongrK3} to do a consistency check via point counting, instead of explicitly computing the cohomology of $\Tilde{Y}$.
To do so, we consider the set of primes of good reduction $\Sigma (B)$, cf.~\autoref{d:Sigma},  and we compute the number of points $\# Y(\IF_p)$ for $p\in\Sigma (B)$.
In the examples below, this set does not need to be very large, 
we took $B=100$. 
So, upon computing the number of points $\# Y_p(\IF_p)$ for a finite set $\Sigma (B)$ of primes of good reduction,
we  can look for levels $N$, congruence subgroups $\Gamma\leq \PSL_2 (\IZ)$ and modular forms
\[
f(q)=\sum_{k=1}^\infty b_kq^k\in S_3^{\textrm{new}}(\Gamma(N))
\]
such that $b_p\equiv \# Y_p(\IF_p)-1 \mod p$ for any $p\in\Sigma (B)$.

\begin{assumption}\label{a:ModularCongrK3}
For some $N\in\IN$, subgroup $\Gamma\leq \PSL_2(\IZ)$, and a positive integer $B$ such that $\#\Sigma_0 (B)\geq 10$, 
there exists a newform $f\in S_3^{\textrm{new}}(\Gamma (N))$ such that
\[
b_p\equiv \# Y_p(\IF_p)-1   \mod p
\]
for every $p\in\Sigma (B)$.
\end{assumption}

\begin{conjecture}\label{c:SingK3}
If $Y$ satisfies~\autoref{a:ModularCongrK3},
then $Y$ is birationally equivalent to a smooth surface $\Tilde{Y}$ with $h^{2,0}(\Tilde{Y})\geq 1$.
In particular, $Y$ is not unirational.
\end{conjecture}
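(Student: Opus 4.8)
The plan is to separate the ``in particular'' clause from the Hodge-theoretic assertion and to isolate where the weight $3$ of the newform is used. Since $\Tilde{Y}$ is by construction a smooth projective model of $Y$, the birational statement is automatic and the entire content is the inequality $h^{2,0}(\Tilde{Y})\geq 1$; as the geometric genus is a birational invariant of smooth surfaces, it suffices to bound $h^{2,0}$ for one smooth model. For non-unirationality I would simply note that~\autoref{a:ModularCongrK3} requires $\#\Sigma_0(B)\geq 10$, so $\Sigma_0(B)\neq\emptyset$ and there is a prime $p$ of good reduction with $\#Y_p(\IF_p)\not\equiv 1 \bmod p$; \autoref{c:NotUnirat} then applies directly.

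Next I would extract a nonzero holomorphic form on $\Tilde{Y}$. For any $p\in\Sigma_0(B)$, \autoref{a:Congruence} gives $\#\cY_p(\IF_p)\equiv \#Y_p(\IF_p)\not\equiv 1 \bmod p$, so the contrapositive of~\autoref{l:Esnault} in dimension $d=2$ forces $h^{k,0}(\cY_\eta)\geq 1$ for some $k\in\{1,2\}$; equivalently $h^{1,0}(\Tilde{Y})\geq 1$ or $h^{2,0}(\Tilde{Y})\geq 1$. The remaining and decisive task is to rule out the case $h^{1,0}\geq 1,\ h^{2,0}=0$, and this is exactly what the \emph{weight} of the matching form is meant to detect.

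For the key step I would analyse the point count modulo $p$ cohomologically, as in~\autoref{e:K3}. Writing $t_i:=\trace(\Frob_p^*\mid \Het{i}(\Tilde{Y},\IQ_\ell))$, the Lefschetz formula gives $\#Y_p(\IF_p)-1\equiv -t_1+t_2-t_3 \bmod p$; the algebraic classes in $\Het{2}$ contribute Frobenius eigenvalues divisible by $p$, and hard Lefschetz identifies $\Het{3}$ with a Tate twist of $\Het{1}$ so that $t_3\equiv 0 \bmod p$. Hence $\#Y_p(\IF_p)-1$ is congruent mod $p$ to $-t_1$ plus the trace on the transcendental part of $\Het{2}$. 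A newform of weight $w$ is attached to a rank-$2$ representation of Hodge type $(w-1,0)+(0,w-1)$: weight $3$ signals the type $(2,0)+(0,2)$, which lives in $\Het{2}$ and carries $h^{2,0}\geq 1$, whereas the alternative $h^{2,0}=0$ (forcing the transcendental part of $\Het{2}$ to vanish by the Lefschetz $(1,1)$ theorem, hence $t_2\equiv 0\bmod p$) would leave only the $-t_1$ term and match a \emph{weight-$2$} form coming from $\Het{1}$, as opposed to the singular-K3 case of~\autoref{t:ModularitySingK3}. Since~\autoref{a:ModularCongrK3} supplies a weight-$3$ form, I would conclude $h^{2,0}(\Tilde{Y})\geq 1$.

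The main obstacle is that the congruence $b_p\equiv \#Y_p(\IF_p)-1 \bmod p$ on the finitely many primes of $\Sigma(B)$ is numerical evidence, not a proof: it does not by itself show that the transcendental part of $\Het{2}(\Tilde{Y})$ is genuinely modular of weight $3$, nor that the two Galois representations agree at all primes, nor even that agreement at the $\#\Sigma_0(B)\geq 10$ available primes pins down the congruence class of the form. Turning the heuristic into a theorem would require an effective Sturm-type bound controlling how many primes suffice, together with an unconditional modularity statement for the relevant rank-$2$ motive of $\Tilde{Y}$ of the prescribed weight---precisely the hypotheses flagged as ``often hard to check''. This is why the assertion is stated as a conjecture rather than proved.
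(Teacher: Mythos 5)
\autoref{c:SingK3} is stated in the paper as a \emph{conjecture}: the paper contains no proof of it, and the surrounding section is explicit that point counting plus a matching newform is a method to \emph{guess} (non-)unirationality, ``not viable to give a proof.'' So there is no argument of the paper to compare yours against; what can be judged is whether your attempt correctly locates the boundary between what is provable and what is genuinely conjectural --- and it does.

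Your decomposition is sound and in one respect sharper than the paper's phrasing: you observe that the ``in particular'' clause is not conjectural at all, since \autoref{a:ModularCongrK3} forces $\Sigma_0(B)\neq\emptyset$, and then \autoref{c:NotUnirat} (valid under the standing assumptions \ref{a:ProjRes} and \ref{a:Congruence} of that subsection) already yields non-unirationality independently of the Hodge-theoretic clause; the paper instead presents non-unirationality as a consequence of $h^{2,0}(\Tilde{Y})\geq 1$. Your use of the contrapositive of \autoref{l:Esnault} to force $h^{1,0}(\Tilde{Y})\geq 1$ or $h^{2,0}(\Tilde{Y})\geq 1$, and your reading of the weight-$3$ hypothesis as the device meant to exclude the case $h^{1,0}\geq 1$, $h^{2,0}=0$, is exactly the heuristic underlying the conjecture: it is the (unproven) converse of Livn\'e's theorem \autoref{t:ModularitySingK3}, just as \autoref{c:Bert} is motivated by the converse of \autoref{t:Modularity}. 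Finally, your closing paragraph names the true obstruction correctly: congruences $b_p\equiv \#Y_p(\IF_p)-1 \bmod p$ at the finitely many primes of $\Sigma(B)$ establish neither that the transcendental part of $\Het{2}(\Tilde{Y},\IQ_\ell)$ is modular nor what its weight is; and since the congruences are modulo \emph{varying} primes $p$, no Sturm-type bound can convert them into an identity of Galois representations --- one would need an unconditional modularity statement for the relevant rank-$2$ motive. Because you flag the weight-$2$ versus weight-$3$ dichotomy (and the divisibilities $t_2\equiv t_3\equiv 0\bmod p$ when $h^{2,0}=0$) explicitly as heuristic rather than as proof, nothing in your write-up claims more than can be justified; it is an accurate account both of the evidence for the statement and of why it must remain a conjecture.
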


\begin{remark}
It is important to take \emph{all} the primes up to a certain bound $B$:
given a surface $Y$ such that $\Tilde{Y}$ is a singular K3 surface,
it might still happen that all for the primes $p$ satisfying a certain congruence one has $\# Y_p(\IF_p)\equiv 1 \mod p$,
giving the false impression that $Y$ is unirational.
In fact this happens for the surface  $S$ in \autoref{ss:ApplicationK3}:
in this case we find that $\# Y_p(\IF_p)\equiv 1 \mod p$ for all the primes $p\equiv 5, 7\mod 8$.
The same happens for the Fermat quartic and all the primes $p\equiv 3\mod 4$.
\end{remark}

\begin{remark}
If $\Tilde{Y}$ is a K3 surface with Picard number $20$ over $\overline{\IQ}$, 
then the level $N$ of the form $f$ determines the discriminant of $\Pic \Tilde{Y}$ modulo squares, cf.~\autoref{t:ModularitySingK3}.
In this case we expect that $b_p\equiv \# Y_p(\IF_p) -1 \mod p$ for all primes $p$ of good reduction.

If $\Tilde{Y}$ is a K3 surface with Picard number $20$ over ${\IQ}$, 
then we expect even more:
$\#\cY_p(\IF_p)=1+20p+b_p+p^2$.
\end{remark}

\subsection{Point counting for Calabi--Yau threefolds}

In this subsection we assume that $\dim Y=3$ and $Y$ satisfies assumptions \ref{a:ProjRes} and \ref{a:Congruence}.
We denote by $\Tilde{Y}$ a fixed smooth model of $Y$.

\begin{remark}
If $Y$ has only ordinary double points, then
\autoref{a:Congruence} is satisfied.
Indeed, an ordinary double point gets resolved after one blow up, introducing a $\IP^1\times \IP^1$ as exceptional divisor.
Then the claim follows from the computations in~\cite[\S 1.6.1]{Mey05}.
\end{remark}

\begin{lemma}\label{l:CongrCY}
If $\Tilde{Y}$ is a Calabi--Yau threefold,
then
the following congruence holds:
\[
a_p(\Tilde{Y})\equiv 1-\# Y_p(\IF_p) \mod p
\]
for every prime of good reduction $p$.
\end{lemma}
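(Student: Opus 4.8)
The plan is to mirror the proof of Lemma~\ref{l:CongrK3} almost verbatim, merely replacing the point-count formula for K3 surfaces with the analogous formula for Calabi--Yau threefolds established in~\autoref{e:CY3fold}. First I would recall from~\autoref{e:CY3fold} that, for a prime $p$ of good reduction for the smooth model $\Tilde{Y}$, the number of $\IF_p$-rational points satisfies
\[
\#\cY_p(\IF_p)=1+(p+p^2)k_p(\Tilde{Y})+p^3-a_p(\Tilde{Y}),
\]
where $k_p(\Tilde{Y})\in\IZ$ is the integer describing the Frobenius action on $H^2$ (and, by Poincar\'e duality, on $H^4$). Equivalently, solving for $a_p$ gives $a_p(\Tilde{Y})=1+(p+p^2)k_p(\Tilde{Y})+p^3-\#\cY_p(\IF_p)$.

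The second step is to reduce this identity modulo $p$. Every term carrying a factor of $p$ vanishes: $(p+p^2)k_p(\Tilde{Y})\equiv 0$ and $p^3\equiv 0 \mod p$. Hence
\[
a_p(\Tilde{Y})\equiv 1-\#\cY_p(\IF_p) \mod p.
\]
This is the crucial simplification: the contribution from $H^2$ and $H^4$, which would otherwise require knowing $k_p(\Tilde{Y})$ and thus the Picard number, disappears entirely modulo $p$, exactly as the $p^2$ term did in the K3 case. Finally I would invoke~\autoref{a:Congruence}, which gives $\#\cY_p(\IF_p)\equiv \#Y_p(\IF_p) \mod p$, and substitute to obtain the claimed congruence
\[
a_p(\Tilde{Y})\equiv 1-\#Y_p(\IF_p) \mod p,
\]
completing the proof.

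I do not anticipate a genuine obstacle here, since the argument is purely formal once~\autoref{e:CY3fold} is available: the only subtlety is bookkeeping the sign convention, namely that in dimension three the middle cohomology $H^3$ enters the Lefschetz sum with a \emph{negative} sign (as $d=3$ is odd), which is why the congruence reads $a_p\equiv 1-\#Y_p$ rather than $a_p\equiv \#Y_p-1$ as in the even-dimensional K3 case of~\autoref{l:CongrK3}. The main point to state carefully is therefore simply that~\autoref{a:Congruence} is what licenses the passage from $\#\cY_p(\IF_p)$ (the count on the smooth model) to $\#Y_p(\IF_p)$ (the count on the possibly-singular $Y$ that one actually computes with).
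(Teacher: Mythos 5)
Your proposal is correct and follows essentially the same route as the paper's proof: both start from the point-count formula of~\autoref{e:CY3fold}, observe that all terms carrying a factor of $p$ (in particular the $H^2$ and $H^4$ contributions involving $k_p(\Tilde{Y})$) vanish modulo $p$, and then invoke~\autoref{a:Congruence} to pass from $\#\cY_p(\IF_p)$ to $\#Y_p(\IF_p)$. Your remark about the sign coming from the odd-degree middle cohomology is also the right explanation for the difference with~\autoref{l:CongrK3}.
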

\begin{proof}
As $\Tilde{Y}$ is a CY 3-fold, we know that
\[
a_p(\Tilde{Y})=1+(p+p^2)k_p(\Tilde{Y})+p^3-\#\cY_p(\IF_p),
\]
cf.~\autoref{e:CY3fold}.
By~\autoref{a:Congruence},
this yields
\begin{align*}
    a_p(\Tilde{Y})&\equiv 1+(k_p(\Tilde{Y})-\alpha_p)p+(k_p(\Tilde{Y})-\beta_p)p^2+p^3-\# Y_p(\IF_p) \mod p\\
                  &\equiv 1 -\# Y_p(\IF_p) \mod p\, .
\end{align*}
\end{proof}

Analogously to the previous section,
\autoref{l:CongrCY} tells us that if we suspect that $Y$ is not unirational (for example by \autoref{c:NotUnirat}) and that, moreover, its desingularization $\Tilde{Y}$ is a rigid CY 3-fold, 
then we can do a consistency check via point counting instead of explicitly computing the cohomology of $\Tilde{Y}$. 
To do so, it is enough to consider the primes in the set $\Sigma (B)$, cf.~\autoref{d:Sigma}, for some bound $B$.
Also in the example below, we take $B=100$.

We  can then look for levels $N$ and modular forms
\[
f(q)=\sum_{k=1}^\infty b_kq^k\in S_4^{\textrm{new}}(\Gamma_0(N))
\]
such that $b_p\equiv 1-\# Y_p(\IF_p) \mod p$ for any $p\in\Sigma (B)$.
Note that in this case
the congruence subgroup is fixed, 
and 
\autoref{t:Modularity} gives us a hint about the possible values for $N$,
making the search more practical.

\begin{assumption}\label{a:ModularCongr}
For some $N\in\IN$ and a positive integer $B$ such that $\#\Sigma_0 (B)\geq 10$,
there exists a newform $f\in S_4^{\textrm{new}}(\Gamma_0(N))$ such that
\[
b_p\equiv 1-\# Y_p(\IF_p) \mod p
\]
for every $p\in\Sigma (B)$.
\end{assumption}
\begin{conjecture}\label{c:Bert}
If $Y$ satisfies~\autoref{a:ModularCongr},
then $Y$ is birationally equivalent to a smooth threefold $\Tilde{Y}$ with $h^{3,0}(\Tilde{Y})\geq 1$.
In particular, $Y$ is not unirational.
\end{conjecture}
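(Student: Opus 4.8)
The plan is to separate the two assertions of the conjecture. The second, non-unirationality, is the easy part and can in fact be proven outright: the hypothesis $\#\Sigma_0(B)\geq 10$ guarantees in particular that $\Sigma_0(B)\neq\emptyset$, so there is at least one prime $p$ of good reduction with $\#Y_p(\IF_p)\not\equiv 1\bmod p$. Since we are working under the standing \autoref{a:Congruence}, \autoref{c:NotUnirat} then applies verbatim and yields that $Y$ is not unirational, with no modularity input whatsoever. Conversely, once the first assertion $h^{3,0}(\Tilde{Y})\geq 1$ is established, non-unirationality also follows from the fact that a unirational smooth projective threefold has $h^{k,0}=0$ for all $k\geq 1$; I would record both derivations, since the first is unconditional and the second ties the two assertions together.

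The substantive content is the claim that $h^{3,0}(\Tilde{Y})\geq 1$, and here I would argue by contradiction. Suppose $h^{3,0}(\Tilde{Y})=0$. The governing tool is \autoref{l:Esnault}: if in addition $h^{1,0}(\Tilde{Y})=h^{2,0}(\Tilde{Y})=0$, then $\#\cY_p(\IF_p)\equiv 1\bmod p$ for every prime of good reduction, whence $\#Y_p(\IF_p)\equiv 1\bmod p$ by \autoref{a:Congruence}, forcing $\Sigma_0(B)=\emptyset$ and contradicting the hypothesis. Thus the only way to have $h^{3,0}=0$ while $\Sigma_0(B)\neq\emptyset$ is for $h^{1,0}$ or $h^{2,0}$ to be positive, and the role of \autoref{a:ModularCongr} is to exclude these two residual cases. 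I would use the weight of the matching newform as a fingerprint for the Hodge level carrying the point-count signal: a nonzero $h^{1,0}$ would manifest through a weight-$2$ form with $|a_p|\leq 2\sqrt{p}$, a nonzero $h^{2,0}$ through a weight-$3$ form as in \autoref{e:K3}, whereas a match with $S_4^{\mathrm{new}}(\Gamma_0(N))$ together with the congruence $b_p\equiv 1-\#Y_p(\IF_p)\bmod p$ of \autoref{l:CongrCY} and \autoref{e:CY3fold} is the signature of $h^{3,0}=1$. The heuristic recorded in \autoref{r:GeneralisedModularity}, that a two-dimensional piece of $H^3$ with $h^{3,0}=1$ is governed by a weight-$4$ form, is exactly the implication I would like to run in reverse.

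The hard part, and the reason the statement is a conjecture rather than a theorem, is precisely this reversal. \autoref{a:ModularCongr} supplies only the residues $b_p\bmod p$ for the finitely many primes $p\leq B$, and nothing in the hypotheses lets one upgrade a finite list of mod-$p$ congruences either to the equality $a_p(\Tilde{Y})=b_p$ of \autoref{t:Modularity} or to a congruence valid for all primes of good reduction. Consequently I cannot rigorously exclude that the observed weight-$4$ behaviour is an arithmetic coincidence over the range $p\leq B$, nor that it is produced by a lower-weight object entering through a product or symmetric-power construction while $h^{3,0}(\Tilde{Y})$ itself vanishes. A genuine proof would have to bridge this gap, for instance by first exhibiting $\Tilde{Y}$ as a rigid Calabi--Yau threefold so that \autoref{t:Modularity} applies and the congruence becomes an identity, which is the very conclusion one is trying to reach. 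Absent such an independent geometric input the implication can only be conjectured, and the numerical condition $\#\Sigma_0(B)\geq 10$ is best read as strong evidence rather than a proof.
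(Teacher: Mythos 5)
The statement you set out to prove is labelled a \emph{conjecture} in the paper, and the paper gives no proof of it: its only support is the heuristic chain formed by \autoref{l:CongrCY}, \autoref{t:Modularity} and \autoref{r:GeneralisedModularity}, together with the worked examples. Your assessment is therefore the right one, and it matches the paper's own stance on the statement.

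Two points in your analysis deserve emphasis because they are exactly right. First, the clause ``in particular, $Y$ is not unirational'' is not conjectural at all: since \autoref{a:ModularCongr} demands $\#\Sigma_0(B)\geq 10$, the set $\Sigma_0(B)$ is nonempty, and under the standing hypotheses \autoref{a:ProjRes} and \autoref{a:Congruence} of that subsection, \autoref{c:NotUnirat} applies to any $p\in\Sigma_0(B)$ and yields non-unirationality with no modularity input whatsoever; the newform only enters as a consistency check. Second, the genuinely conjectural content is the assertion $h^{3,0}(\Tilde{Y})\geq 1$, and you identify the precise obstruction: \autoref{a:ModularCongr} supplies only finitely many congruences $b_p\equiv 1-\#Y_p(\IF_p)\bmod p$ for $p\in\Sigma(B)$, and no available result upgrades such finite mod-$p$ data to an identity of $a_p$'s, nor runs \autoref{t:Modularity} (or the heuristic of \autoref{r:GeneralisedModularity}) in reverse to extract a Hodge-theoretic conclusion. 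Your further observation that \autoref{l:Esnault} requires the vanishing of \emph{all} of $h^{1,0},h^{2,0},h^{3,0}$, so that point counts alone cannot isolate $h^{3,0}$, pinpoints exactly why the weight-$4$ matching is needed as an extra hypothesis and why it remains heuristic. This is consistent with how the paper itself proceeds: for the double octic $X$ of \autoref{ss:GuessingWeinzierlOctic}, the matching newform is presented as a hint, while the actual conclusions ($h^{3,0}=1$, rigidity) are established by independent geometric means, namely \autoref{p:XCY} and the $L$-function computation reported in \autoref{r:Equality}, not by invoking the conjecture.
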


\subsection{Application to two surfaces}\label{ss:ApplicationK3}
Let $Q\subset \IP^3$ be the quartic surface defined by
\begin{equation*}
    Q\colon \; -x_3^2(x_4^2-x_1x_2)+(x_1+x_2)(x_1x_4^2+x_2x_4^2-4x_1x_2x_4+x_1^2x_2+x_1x_2^2)=0,
\end{equation*}
and  $S\subset \IP(3,1,1,1)$ the double sextic defined by
\begin{equation*}
    S\colon \; y_0^2=(y_1^2+y_2^2-2y_3^2)(y_1^2y_2^2-y_3^4),
\end{equation*}
both studied in~\cite{FvS19}.
The two surfaces have several singular points, so we cannot immediately conclude that they are birationally equivalent to K3 surfaces.
After counting points of the reductions modulo all the odd primes smaller than $100$, 
one sees that $\# Q_p (\IF_p), \# S_p (\IF_p)\not\equiv 1 \mod p$ for many primes $p$.
Moreover $Q$ and $S$ both satisfy~\autoref{a:ModularCongrK3} with
\begin{align*}
    f(q)&=q - 6q^5 + 9q^9 + 10q^{13} -30q^{17}+11q^{25}+42q^{29}-70q^{37}+ O(q^{41})\in S_3^{\textrm{new}}(\Gamma_1(16)),\\
    g(q)&=q - 2q^2 - 2q^3 + 4q^4 + 4q^6 - 8q^8 - 5q^9 + 14q^{11} -8q^{12} +O(q^{14})\in S_3^{\textrm{new}}(\Gamma_1(8))
\end{align*}
respectively.
Therefore we expect $Q$ and $S$ not to be unirational.

The prediction for $S$ is confirmed by~\cite[Theorem]{FvS19},
where it is shown that its smooth model $\Tilde{S}$  a K3 surface  whose Picard lattice has rank $20$ and discriminant $-8$.
In the same paper, the authors suggest that $Q$ and $S$ are birationally equivalent, which contradicts the heuristic associating them to two distinct modular forms, moreover of levels differing by $2$, which is not a square.
Below we show that the suggestion is indeed wrong.

\begin{proposition}
 The surfaces $Q$ and $S$ are not birationally equivalent.
\end{proposition}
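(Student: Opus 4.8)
The plan is to distinguish $Q$ and $S$ by a birational invariant, exploiting the fact that both surfaces have smooth models that are K3 surfaces. The key observation is that the paper has already computed (via the modularity heuristic) that $Q$ and $S$ are associated to distinct newforms, $f\in S_3^{\textrm{new}}(\Gamma_1(16))$ and $g\in S_3^{\textrm{new}}(\Gamma_1(8))$, of levels $16$ and $8$ respectively. If $Q$ and $S$ were birationally equivalent, their smooth K3 models $\Tilde{Q}$ and $\Tilde{S}$ would be isomorphic (a birational map between smooth minimal K3 surfaces is an isomorphism), hence would have the same transcendental lattice and, by \autoref{t:ModularitySingK3}, would be associated to the same newform up to the level-discriminant relation $N=-m^2\cdot\det\Pic$. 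Since $16$ and $8$ differ by the factor $2$, which is not a square, this already signals a contradiction; the task is to upgrade this heuristic into a rigorous argument.

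The first step is to show rigorously that $\Tilde{Q}$ is a K3 surface, not merely suspected to be one. I would resolve the singularities of $Q$ explicitly and verify that the minimal model has trivial canonical bundle and $h^{1,0}=0$, so that it is genuinely K3. For $\Tilde{S}$ this is already established in \cite{FvS19}, where it is shown to be a singular K3 (Picard rank $20$) with transcendental lattice of discriminant $-8$. The second step is to compute the discriminant of $\Pic\Tilde{Q}$ (equivalently the discriminant of its transcendental lattice), either directly via lattice theory or rigorously through the modularity theorem: once $\Tilde{Q}$ is known to be a singular K3, \autoref{t:ModularitySingK3} guarantees an \emph{actual} (not merely conjectural) associated newform, and matching its Hecke eigenvalues $a_p$ against the point counts $\#Q_p(\IF_p)-1-p^2$ pins down the level, hence the discriminant modulo squares.

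The decisive step is then a comparison of invariants. For singular K3 surfaces, the transcendental lattice $T$ is a rank-$2$ positive-definite even lattice, and its determinant (which equals $-\det\Pic$ up to sign) is a birational—indeed isomorphism—invariant. I would show $\det T_{\Tilde{Q}}$ and $\det T_{\Tilde{S}}$ differ by a non-square factor: the level relation $N=-m^2\det\Pic$ from \autoref{t:ModularitySingK3} forces $\det\Pic\Tilde{Q}$ and $\det\Pic\Tilde{S}$ to lie in the same square class as $16$ and $8$ respectively, but $16/8=2$ is not a square, so the two discriminants are not equal. Consequently $T_{\Tilde{Q}}\not\cong T_{\Tilde{S}}$, whence $\Tilde{Q}\not\cong\Tilde{S}$, and therefore $Q$ and $S$ cannot be birationally equivalent.

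The main obstacle will be the rigorous determination of $\Pic\Tilde{Q}$, or at least of its discriminant modulo squares: the point-counting argument only furnishes a heuristic identification of the newform unless one verifies the hypotheses of \autoref{t:ModularitySingK3}, which requires first proving that $\Tilde{Q}$ has maximal Picard number $20$ over $\overline{\IQ}$. Establishing this (for instance by exhibiting $20$ independent divisor classes, or by a Shioda–Inose or lattice-theoretic analysis of the singular fibers and resolution curves) is the genuine work; once the two transcendental lattices are known to have non-square-ratio discriminants, the non-equivalence is immediate. An alternative that sidesteps the full lattice computation is to argue directly that the associated modular forms $f$ and $g$ are distinct newforms of incompatible levels, and that a birational equivalence would force equal $L$-functions on $H^2_{\mathrm{tr}}$, contradicting the distinctness; but making this fully rigorous still reduces to controlling the transcendental lattices, so the core difficulty is the same.
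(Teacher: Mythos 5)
Your overall strategy --- reduce to comparing the smooth K3 models (using that birationally equivalent K3 surfaces are isomorphic) and then distinguish them by the discriminant of the Picard/transcendental lattice --- is the same as the paper's, and your parenthetical fallback ``directly via lattice theory'' is precisely what the paper does. But the route you actually develop, via \autoref{t:ModularitySingK3}, has a genuine gap at its decisive step: matching Hecke eigenvalues against point counts for the primes $p<100$ does \emph{not} rigorously identify the newform attached to $\Tilde{Q}$, and hence does not pin down its level $N$. Agreement at finitely many primes becomes a proof of such an identification only through an effective comparison argument (Faltings--Serre/Livn\'e's criterion), which requires an a priori bound on the conductor of the $2$-dimensional Galois representation on the transcendental part; but the natural source of such a bound is the relation $N=-m^2\det\Pic\Tilde{Q}$ of \autoref{t:ModularitySingK3} itself, i.e.\ it presupposes knowledge of the very discriminant you are trying to compute. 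There is also an extraction problem you pass over: $\#Q_p(\IF_p)$ sees the trace of Frobenius on all of $H^2$, so isolating the transcendental trace requires knowing the Galois action on the $20$ algebraic classes, not just their number. Finally, $N=-m^2\det\Pic$ holds for \emph{some} $m\in\IN$, so even a rigorously identified level determines the discriminant only up to squares; that happens to suffice here (square class $-1$ versus $-2$), but only after the preceding gaps are closed.

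The paper avoids all of this: it checks that $Q$ and $S$ have only ADE singularities (so $\Tilde{Q},\Tilde{S}$ are K3), quotes \cite{FvS19} for $\Pic\Tilde{S}$ having rank $20$ and discriminant $-8$, and then, projecting $Q$ from a node to realize it as a double sextic, exhibits explicit $(-2)$-curves and exceptional divisors generating a rank-$20$ sublattice $\Lambda\subseteq\Pic\Tilde{Q}$ with $\det\Lambda=-16$ and $[\Pic\Tilde{Q}:\Lambda]\le 2$, whence $\det\Pic\Tilde{Q}\in\{-16,-4\}$, which is never $-8$. Note that this explicit-divisor computation is exactly the ``genuine work'' you correctly identify at the end of your proposal (proving $\rho(\Tilde{Q})=20$ by exhibiting independent classes); once it is done, the comparison is immediate --- since $\det\Lambda=[\Pic\Tilde{Q}:\Lambda]^2\det\Pic\Tilde{Q}$, any finite-index sublattice already determines the square class of the discriminant --- and the modular-form superstructure contributes nothing to the proof. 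So your proposal is best read as the heuristic motivation (which the paper also records, and deliberately labels as heuristic) plus, mentioned only as an aside, the argument that actually constitutes the paper's proof.
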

\begin{proof}
Let $\Tilde{Q}$ and $\Tilde{S}$ denote the smooth models of $Q$ and $S$, respectively.
It is easy to check that $Q$ has only ADE singularities, and hence $\Tilde{Q}$ is a K3 surface.
The same holds for $S$ and $\Tilde{S}$.
Recall that if two K3 surfaces are birationally equivalent then they are isomorphic.
This means that if $Q$ and $S$ are birationally equivalent, $\Tilde{Q}$ and $\Tilde{S}$ are isomorphic;
in particular, they have isometric Picard lattices.

From~\cite[Theorem]{FvS19} we know that $\Pic \Tilde{S}$ has rank $20$ and  discriminant $-8$.
By projecting from a node of $Q$, we obtain a model of $Q$ as double sextic.
It is then easy to find some $-2$-curves on this model.
Using these curves and the exceptional divisors we generate a lattice $\Lambda\subseteq\Pic \Tilde{Q}$ of rank $20$, discriminant $-16$ and index $[\Pic \Tilde{Q} : \Lambda]\leq 2$.
It follows that $\Pic \Tilde{Q}$ has rank $20$ and discriminant either $-16$ or $-4$.
In any case, $\det \Pic \Tilde{Q}\not\cong \det \Pic \Tilde{S}$,
showing that $Q$ and $S$ are not birationally equivalent.
\end{proof}
\begin{remark}
Note that the level of $f$ is a square multiple of $\det \Pic \Tilde{Q}$, as predicted by~\autoref{t:ModularitySingK3}.
The same holds for $g$ and $S$.
\end{remark}

\subsection{Application to the double octic}\label{ss:GuessingWeinzierlOctic}
Let $X$ be the double octic defined in~\eqref{eq:DoubleOctic}.
Notice that  it is reasonable to assume that $X$ satisfies \ref{a:ProjRes} and~\ref{a:Congruence}.
We proceed then with the point counting, obtaining~\autoref{t:PointsXp}.

\begin{table}[ht]
\begin{tabular}{|l|llllllll|}
\hline
$p$ & 3 & 5 & 7 & 11 & 13 & 17 & 19 & 23  \\
\hline
$\# X_p (\IF_p)$ & 46 & 180 & 500 &  1716 & 2732 & 6060 & 8132 & 13932 \\
\hline
$1-\# X_p (\IF_p)\mod p$ & 0 & 1 & 5 & 1 & 12 & 10 & 1 & 7 \\
\hline
\hline
$p$ & 29 & 31 & 37 & 41 & 43 & 47 & 53 & 59  \\
\hline
$\# X_p (\IF_p)$ & 27492 & 33476 & 55580 &  75276 & 86612 & 112380 & 159492 & 219492\\
\hline
$1-\# X_p (\IF_p)\mod p$ & 1 & 5 & 32 & 1 & 34 & 45 & 39 & 48 \\
\hline
\hline
$p$ & 61 & 67 & 71 & 73 & 79 & 83 & 89 & 97  \\
\hline
$\# X_p (\IF_p)$ & 241916 & 317300 & 376716 &  409532 & 517892 & 599172 & 735132 &  948380\\
\hline
$1-\# X_p (\IF_p)\mod p$ & 11 & 13 & 11 & 72 & 33 & 6 & 9 & 87 \\
\hline
\end{tabular}
\caption{Point counting on $X_p$ for every odd prime smaller than 100.}
\label{t:PointsXp}
\end{table}
For many primes $p$ we see that $X_p (\IF_p)\not\equiv 1\mod p$, hence 
by~\autoref{c:NotUnirat} we expect $X$ not to be unirational.
In an attempt to gather more evidence towards the non-unirationality of $X$, we look for a modular form satisfying~\autoref{a:ModularCongr}.
We then find that the unique newform $f(q)=\sum_{k=1}^\infty b_kq^k\in S_4^{\textrm{new}}(\Gamma_0(6))$, with
\begin{equation}\label{eq:NF604}
    f(q) = q - 2q^2 - 3q^3 + 4q^4 + 6q^5 + 6q^6 - 16q^7 - 8q^8 + 9q^9 - 12q^{10} + 12q^{11} + O(q^{12}),
\end{equation}
satisfies~\autoref{a:ModularCongr}, giving yet a stronger hint, cf.~\autoref{c:Bert}.
The heuristics suggesting that $X$ is not unirational are indeed confirmed by~\autoref{p:XCY}.
\begin{remark}\label{r:Equality}
In this particular case, the evidence is even stronger:
indeed, one can check that
\begin{equation}\label{eq:bp}
    b_p=1-8p +4 p^2 + p^3 - \# X_p (\IF_p)
\end{equation}
for every prime $3 < p < 100$,
suggesting that $\Tilde{X}$ is a \emph{rigid} CY 3-fold.
This is confirmed by B. Naskr\c{e}cki in a private communication:
he proves that the $L$-function of $\Tilde{X}$ is indeed \eqref{eq:NF604} and deduces the rigidity of $\Tilde{X}$.
The same result was suggested also by S. Cynk.

In~\cite{vdBE05}, van den Bogaart and Edixhoven compute the cohomology of varieties defined over $\IQ$ and satisfying even stronger conditions on the number of rational points of their reduction modulo a prime $p$.
In particular, they show that if $Y/\IQ$ is a smooth variety such that $\# Y_p(\IF_p)$ behaves polynomially for all primes $p$, then all the cohomology groups of odd degree vanish and they determine the dimension of the cohomology groups of even degree.
\end{remark}

\begin{remark}
The fact that \eqref{eq:bp} does not hold for $p=3$
is consistent with  the level of $f$ being $6$, 
suggesting that $p=3$ is a prime of bad reduction for $X$, cf.~\autoref{t:Modularity}. 
Indeed, if $X_3$ denotes the reduction of $X$ modulo $3$,
then, looking at~\autoref{t:SingularPoints}, one can see that  the four singular points 
$$
(1:1:1:0),(1:1:4:3), (1:4:1:3), (4:1:1:3)\in X
$$ 
collapse to the singular point $(1:1:1:0)\in X_3$.
One can then check that $X_3$ has a non-ordinary singularity in $(1:1:1:0)$,
whereas $X$ has an ordinary one.
\end{remark}
\begin{remark}
The same newform $f$ in~\eqref{eq:NF604} appears also in~\cite{HSvGvS01} and~\cite[\S 5.7]{Mey05}, in relation to other CY 3-folds.
\end{remark}

\bibliographystyle{amsplain}
\bibliography{references}

\end{document}